 \journalname{Test}
\begin{document}

\title{Dynamical  multiple   regression  in function spaces, under   kernel regressors, with ARH(1) errors
}
%\subtitle{Dynamical multivariate funtional regression}

\titlerunning{Dynamical multivariate funtional multiple regression}        % if too long for running head

\author{M. D. Ruiz-Medina         \and
        D. Miranda \and   R.M. Espejo
}

\authorrunning{Ruiz-Medina, Miranda and Espejo} % if too long for running head

\institute{M.D. Ruiz-Medina \at
              Department of Statistics and O.R. University of Granada \\Campus Fuente Nueva s/n, Granada,  18071, SPAIN\\
              Tel.: +(34)958243270\\
              Fax: +(34)958243267\\
              \email{mruiz@ugr.es}           %  \\
%             \emph{Present address:} of F. Author  %  if needed
           \and
           D. Miranda \at
              Department of Statistics and O.R. University of Granada \\Campus Fuente Nueva s/n, Granada,  18071, SPAIN\\
              %Tel.: +(34)958243270\\
              %Fax: +(34)958243267\\
              %\email{dorismih11@gmail.com} 
\and
R.M. Espejo\at
Department of Statistics and O.R. University of Granada \\Campus Fuente Nueva s/n, Granada,  18071, SPAIN\\
              %Tel.: +(34)958243270\\
              %Fax: +(34)958243267\\
              %\email{rosaespejo@@ugr.es} 
}

\date{Received: date / Accepted: date}
% The correct dates will be entered by the editor

\maketitle

\begin{abstract}
A  linear multiple regression model in function spaces is formulated, under
temporal correlated errors.  This formulation involves kernel regressors.    A  generalized least-squared regression parameter estimator is derived. Its asymptotic normality and strong consistency is obtained, under suitable conditions.    The  correlation  analysis  is based on a   componentwise  estimator of the residual  autocorrelation operator. When the dependence structure of the functional error term is unknown, a plug-in generalized least-squared regression parameter estimator is  formulated. Its strong-consistency is proved as well.      A simulation study is undertaken to illustrate  the performance  of the presented  approach, under  different regularity conditions. An application to  financial panel data is also considered. 
\keywords{ARH(1)   errors \and dynamical functional multiple  regression  \and firm leverage maps  \and generalized least squared estimator
\and kernel regressors}
% \PACS{PACS code1 \and PACS code2 \and more}
\subclass{MSC code1 60G25\and  60G60\ and  62J05   \and MSC code2 62J10}
\end{abstract}
% Chequear strong/-consistency, least-squares/d; et al con cursiva y %número de autores
\section{Introduction}
\label{intro}
 Several authors highlight the advantages of the functional regression framework over discrete multivariate approaches (see, for example,  Marx and  Ei-\linebreak lers, 1999; Ramsay  and  Silverman, 2005;  Cuevas,   Febrero and   Fraiman,  2002). Indeed, only in the functional  setting, we can incorporate  smoothness assumptions on the predictors, and  the regression  parameter space. In particular, Crambes, Kneip and  Sarda  (2009) derive a smoothing splines estimator for the functional slope parameter. They  prove that the rate of convergence of the prediction
error depends on the smoothness of the slope function, and on the structure of
the predictors. An  overview on    functional principal component regression and functional partial
least-squared regression, in the parameter estimation of the functional linear model with scalar response, is presented in Febrero-Bande,  Galeano and  Gonzalez-Manteiga (2015). There exists an extensive  literature on the asymptotic properties of functional regression estimators, in the case of  scalar response and functional regressors (see, for example, Cai and  Hall, 2006, and the references therein). 
A semi-functional partial linear approach for regression, based on nonparametric time series, is  considered in Aneiros-P\'erez and Vieu (2006; 2008). 
 Applying the Projection Pursuit Regression
principle,     the approximation of the regression
function in the case of a functional predictor and
a scalar response is addressed in Ferraty et al. (2013)   (see also 
 Ferraty and Vieu, 2006; Ferraty and Vieu,  2011).  In the nonparametric setting, the case of functional response and predictor is studied, for example,  in   Ferraty,  Keilegom and Vieu  (2012), where  a kernel type estimator of the regression operator is derived, and its pointwise asymptotic normality is obtained. Goia and  Vieu (2015) adopt a semiparametric approach,  in  a two-terms Partitioned Functional Single Index Model.  Cuevas  (2014)   discusses central topics in Functional Data Analysis (FDA), related to probabilistic tools,  definition and estimation of centrality parameters, and the main trends in regression, classification,  
dimension reduction, and bootstrap methods for FDA.
   Recent advances  in the statistical analysis of 
 high-dimensional data, including regression, from the parametric, semiparametric  and nonparametric FDA frameworks,  are collected in the Special Issue  by Goia and Vieu (2016).

 The   kernel  formulation of the  regression parameters is usually  adopted in the literature of parametric linear  regression with  functional response and regressors (see, for example,   Chiou,  M\'uller and  Wang, 2004; Ruiz-Medina, 2011; Ruiz-Medina, 2012a; Ruiz-Medina, 2012b,  and the references therein).
  An extensive review, and further references for functional regression approaches,
including the case of functional response and regressors, can be found in Morris (2015). See also the   monograph by  Hsing and Eubank (2015), where several  functional analytical tools are introduced, for the estimation of  random elements in function spaces.  
 The concept of $L^{r}-m$-approximable processes also allows to modeling  the temporal dependence in the regression functional errors (see, for example,  Horv\'ath and  Kokoszka, 2012).  A central topic in this book is the analysis of functional data,  displaying  dependent  structures    in time and space.
A fixed effect approach in Hilbert spaces is adopted in Ruiz-Medina (2016), for FANOVA analysis under dependent errors. For simple regression, with explanatory variable taking  values in some abstract space of functions, the rate of convergence of the mean squared error of the functional version of the Nadaraya--Watson kernel estimator
is derived, in  Benhenni,  Hedli-Griche and Rachdi  (2017),  when the errors are represented by a  stationary short or long memory process. 

The present paper considers  functional response and kernel regressors,  and adopts the ARH(1) process framework (see  Bosq, 2000), to represent the temporal correlation  of the functional errors. The  efficiency, consistency and asymptotic normality  of a  componentwise estimator of the residual autocorrelation operator can then be  obtained,  from the results derived, in the ARH(1) process framework (see, for example,  Bosq, 2000;  Bosq and Ruiz-Medina, 2014; Guillas,  2001;  Mas,   2004; and  Mas,  2007). The  nonparametric time series model  introduced in Ferraty,  Goia and  Vieu (2002) could also be adopted in the representation of the temporal dependence displayed by  the regression  error term. However, this paper focuses in the linear parametric time series framework. As proved in this paper, good asymptotic properties are displayed by the regression estimators in this framework,  avoiding, in particular,  some computational drawbacks,  arising  in the  nonparametric functional statistical context. It is well-known that  the functional nonparametric  statistical modelling offers a   more flexible framework, but suffers of the so-called \emph{curse of dimensionality}, caused by the sparsity of data in high-dimensional spaces, affecting the asymptotic properties, in particular,  of the nonparametric regression estimators. Geenens (2011) proposes slightly modified estimators, considering a semi-metric to measure the  proximity between two random elements in an infinite-dimensional space. Furthermore, the implementation of nonparametric estimators  requires   the resolution of several selection problems. For example, in the implementation of the local-weighting-based  approach,   a smoothing parameter, and  a suitable  kernel must be previously selected. Recently, Kara et al. (2017a)
 investigate various nonparametric models, including regression, conditional distribution, conditional density and conditional hazard function, when the covariates are infinite dimensional. They  prove uniform in bandwidth asymptotic results for kernel estimators of these functional operators. Data-driven bandwidth selection is also discussed for applications.

Inverse problems can be described as functional equations, where the value of the
function is known or easily estimable, but the argument is unknown. In the finite-dimensional case, parameter estimation of the general linear model constitutes an example of inverse problem, where the unknown  argument of the design matrix, the regression parameter,  should be approximated. The usual two-dimensional definition  of the design matrix involves the sample, and    the covariate population dimensions. 
In the analysis of functional data, more complex dependence models arise, involving   conditional distributions in abstract spaces. 
We refer to the reader to the  recent contribution by Chaouch, Laib and Louani (2017), on  kernel conditional mode estimation, from functional stationary ergodic data, 
in the context of random elements in  semi-metric abstract spaces (see also Ling, Liu and  Vieu, 2017).

 This paper considers the problem of linear functional multiple regression estimation, when  the   response takes values in an abstract separable Hilber space $H,$ and the  regressors are operators on $H.$ The temporal dependence of the  errors is represented, in terms of an ARH(1)  time series model. Indeed, the presented  approach provides a  functional formulation of the parametric part, appearing in the  
 above-referred semiparametric model adopted in 
Aneiros-P\'erez and  Vieu (2006; 2008) (but  under a  parametric framework, in the linear  time series analysis of the temporal correlated random part). 
 
  The practical motivation of the    kernel formulation of the regressors relies on the incorporation of possible correlations between the response and the regressors at different scales and domains in  time, space or  depth, among others.
   For example, the designed experiments could  be run over time, with the  control of the regressors   over  space and depth, in a period of time.  
 This type of models arise, for instance, in the estimation of ocean surface temperature maps over time, from the evolution of related  functional covariates observed at different ocean depth intervals (see Espejo, Fern\'andez-Pascual and Ruiz-Medina, 2017).    In  this paper, a financial panel data set is analyzed.   Firm leverage  mapping, during a given period of time, in the Spanish communities of the Iberian Peninsula, is addressed from a  functional perspective. The kernel regressors are the firm factor determinants, involved in the analysis of the financing decisions of the company, depending on  the industrial area sampled,  and the  Spanish community studied 
 (see  Section \ref{application} and Supplementary Material II).
 The proposed functional estimation approach involves two steps: 
Generalized least-squared regression parameter estimation, and  ARH(1) residual correlation analysis, for functional estimation of the response. The strong consistency of the  generalized 
least-squared functional regression parameter estimator is derived. In the case where the auto-covariance matrix operator of the error term is unknown, the strong-consistency of  the corresponding generalized 
least-squared plug-in  estimator is obtained as well.  Asymptotic normality of the generalized-least-squared functional parameter estimator is  proved, in the case where the functional errors follow a known infinite-dimensional  Gaussian distribution. 

 The outline of the paper is the following.
Section \ref{sec3} introduces the studied dynamical multiple regression model in Hilbert spaces, with  ARH(1) error term. 
The generalized least-squared   regression parameter estimator     is derived 
in Section \ref{sec3b}. Its asymptotic normality and strong consistency is obtained as well.  When the functional correlation structure of the error process is unknown, sufficient conditions are  considered, for the strong consistency of the  generalized 
least-squared plug-in parameter  estimator, in Section \ref{esresiduals}.
A simulation study is undertaken in Section   \ref{Secsims}, to illustrate the performance of the presented approach, under different  scenarios, assuming different regularity conditions on the regression  functional  parameters, kernel regressors, and error correlation structure.
A real-data application is developed in Section \ref{application}, in the financial panel data context.  Final comments are provided  in Section \ref{fc}. The finite-sample-size properties  in relation to the truncation parameter are also illustrated, in  the  simulation study (see also  Supplementary Material I). In the  Supplementary Material II, details on in the real-data application, and   the practical implementation  are provided.
\section{The model}
\label{sec3}
Let $(\Omega,\mathcal{A},P)$  be  the basic probability space, and $H$ be a real separable Hilbert space. 
The following inverse problem formulation  of a  dynamical functional regression model is studied:
\begin{equation}
Y_{n}=X_{n}^{1}(\beta_{1}) +\dots + X_{n}^{p}(\beta_{p})+\varepsilon_{n},\quad n\in \mathbb{Z},
\label{modelreg}
\end{equation}
\noindent where $\boldsymbol{\beta}=(\beta_{1}(\cdot),\dots \beta_{p}(\cdot))^{T}\in H^{p};$ $X_{n}^{j}\in \mathcal{S}(H),$  $j=1,\dots,p,$ $n\in \mathbb{Z},$ with $\mathcal{S}(H)$ being  the Hilbert space  of Hilbert--Schmidt operators on $H,$ and $Y_{n},$ $\varepsilon_{n}\in H,$ for each $n\in \mathbb{Z}.$
For a given orthonormal basis $\{\varphi_{k}\}_{k\geq 1}$ of $H,$ denote 
\begin{equation}\left\langle X_{n}^{j}(\varphi_{k}),\varphi_{l}\right\rangle_{H}=x^{j}_{k,l}(n),\quad k,l\geq 1,\ \forall n\in \mathbb{Z},\quad j=1,\dots,p.\label{regressors}\end{equation}
\noindent Since $X_{n}^{j}\in \mathcal{S}(H),$    then, $\sum_{k,l}[x^{j}_{k,l}(n)]^{2}<\infty,$ 
\begin{equation}X_{n}^{j}(f)\underset{H}{=}\sum_{k,l}x^{j}_{k,l}(n)\left\langle f,\varphi_{l}\right\rangle_{H}\varphi_{k},\quad \forall f\in H,\label{regressors2}\end{equation}
\noindent  for every  $n\in \mathbb{Z},$ $j=1,\dots,p,$ where $\underset{H}{=}$ means the equality in the norm of $H.$

The  error term  $\varepsilon\equiv \{\varepsilon_{n},\ n\in \mathbb{Z}\}$ satisfies 
\begin{equation}
E\left[\varepsilon_n |  X_{n}^{1},\dots, X_{n}^{p}\right]=0,\quad \forall n\in \mathbb{Z}.
\label{erruncorrelated} 
\end{equation}
Furthermore, $\varepsilon $ is assumed to be a  zero-mean  ARH(1) process,  i.e., 
\begin{equation}\varepsilon_n=\rho(\varepsilon_{n-1})+\delta_{n}, \ n\in\mathbb{Z},\label{ARHerroerterm}
\end{equation}
\noindent where $\rho$ denotes the autocorrelation operator, which belongs to the space of bounded linear operators $\mathcal{L}(H)$ on $H,$ satisfying 
$\|\rho\|_{\mathcal{L}(H)}^{k}<1,$ for $k\geq k_{0},$ for certain $k_{0}\in \mathbb{N}.$ Here, 
$\{\delta_{n},\ n\in \mathbb{Z}\}$ is a sequence of independent and identically distributed $H$-valued  
zero-mean random variables, with trace autocovariance operator, i.e., defining strong-white noise in $H.$ They are  uncorrelated with the random  initial condition
$\varepsilon_{0}$ (see Bosq, 2000).
\begin{remark} Let $\{\varepsilon_{n},\ n\in \mathbb{Z}\}$ be Gaussian  with known auto-covariance and cross-covariance  operators,  the generalized least-squared estimator $\widehat{\boldsymbol{\beta}}_{N},$  derived in Section \ref{sec3b} below,   displays an asymptotic 
infinite-dimensional Normal  distribution, as the functional sample size $N\to \infty.$ 

 A generalization of the classical linear statistical test, for checking    the significance of the functional  parameters $\boldsymbol{\beta}_{1},\dots,\boldsymbol{\beta}_{p}$ can be obtained (see, for example, Theorem 3 in  Section 6, in Ruiz-Medina, 2016). Indeed, under this Gaussian scenario,  the adaptative selection, in time, of the regressors    could be derived, from  a temporal  adaptative significance statistical test, keeping in mind the ARH(1) structure of the  error term (see, for example,  Kara et al.,  2017b, where the same ideas motivate the use of kernel Nearest-Neighbor (kNN) estimators,  in the nonparametric  regression, conditional density, conditional
distribution, and hazard operator based estimation). 
\end{remark}
 Denote by  $$R_{0}=E[\varepsilon_{0}\otimes \varepsilon_{0}]=E[\varepsilon_{n}\otimes\varepsilon_{n}],\quad \forall n\in \mathbb{Z},$$ \noindent the trace autocovariance operator, and   by
$$R_{1}=E[\varepsilon_{0}\otimes\varepsilon_{1}]=E[\varepsilon_{n}\otimes \varepsilon_{n+1}],\quad \forall n\in \mathbb{Z},$$ \noindent the nuclear 
cross-covariance operator.

The experiment is run, and  a functional sample $Y_{1},\dots, Y_{N}$ of size $N$ of the response (\ref{modelreg}) is collected, under the control of  the kernel regressors, $X_{i}^{1},\dots, X_{i}^{p},$   for the times $i=1,\dots, N.$    From equations  (\ref{modelreg}), and  (\ref{erruncorrelated})--(\ref{ARHerroerterm}),
 \begin{eqnarray}
&& \mu_{n,\mathcal{X}}=E[Y_{n}|X_{n}^{1},\dots, X_{n}^{p}]=X_{n}^{1}(\beta_{1}) +\dots + X_{n}^{p}(\beta_{p}),\quad 
 n=1,\dots,N\nonumber\\
&& E\left[(Y_{i}-\mu_{i,\mathcal{X}})\otimes (Y_{j}-\mu_{j,\mathcal{X}})\right]=
 E\left[ \varepsilon_{i}\otimes \varepsilon_{j}\right]
 = \rho^{|j-i|}R_{0},
 \label{eqresponse}
 \end{eqnarray}
\noindent   for  $i,j\in \{1,\dots,N\},$ where, $\mathcal{X}$ denotes the  vector  value of the covariates, to which we are conditioning. Here,   in the last equation,  we have applied that 
$$\varepsilon_{n}=\sum_{j=0}^{k}\rho^{j}\delta_{n-j}+\rho^{k+1}(\varepsilon_{n-k-1}),\quad k\geq 1$$
\noindent (see equation (3.11) in Bosq, 2000).
 Thus, the 
 covariance structure of the functional errors  $Y_{1}-\mu_{1,\mathcal{X}},\dots, Y_{N}-\mu_{N,\mathcal{X}}$ can be expressed, in matrix operator form,  as follows:
 \begin{eqnarray}\mathbf{C}&:=&E\left[\left((Y_{1}-\mu_{1,\mathcal{X}}),\dots,   (Y_{N}-\mu_{N,\mathcal{X}}) \right)^{T}\right.\nonumber\\ && \hspace*{1cm}\otimes \left.\left((Y_{1}-\mu_{1,\mathcal{X}}),\dots,  (Y_{N}-\mu_{N,\mathcal{X}})\right)\right]\nonumber\\
  &=&\left[\begin{array}{ccccc}R_{0} & \rho R_{0} &       \rho^{2} R_{0} & \ldots & \rho^{N-1} R_{0} \\
 \rho R_{0} & R_{0} & \rho R_{0} & \ldots & \rho^{N-2} R_{0}\\
\vdots & \ldots & \ldots & \ldots & \vdots\\
\rho^{N-1} R_{0} & \rho^{N-2} R_{0}&\ldots   &\ldots & R_{0}\\
 \end{array}\right]=  \left[\begin{array}{ccccc} I & \rho  &       \rho^{2}  & \ldots & \rho^{N-1}  \\
 \rho  & I & \rho  & \ldots & \rho^{N-2} \\
\vdots & \ldots & \ldots & \ldots & \vdots\\
\rho^{N-1}  & \rho^{N-2} &\ldots   &\ldots & I\\
 \end{array}\right] \nonumber\\
 &\times &\left[\begin{array}{ccccc}R_{0} & 0 &       0& \ldots & 0\\
 0 & R_{0} & 0 & \ldots & 0\\
\vdots & \ldots & \ldots & \ldots & \vdots\\
0 & 0&\ldots   &\ldots & R_{0}\\
 \end{array}\right]= \boldsymbol{\rho}\mathbf{R}_{0},
 \label{cocopmat}
\end{eqnarray}
\noindent where $I$ denotes the identity operator on $H.$

\begin{remark}
\label{ARHpcase} 
The present approach can be easily extended to the case of an ARH(p), $p\geq 2,$ error term, replacing operator $\rho$ by $$\rho^{\prime }=\left[\begin{array}{cccc}
 \rho_{1} & \rho_{2} &\dots & \rho_{p}\\
 I & 0 & \dots & 0  \\
  0 & I & 0& \dots   \\
  0 &\dots & I & 0\\
  \end{array}\right],
  $$ 
  \noindent where, as before,  $I$ denotes the identity operator on $H$ (see Bosq, 2000, p.128).
\end{remark}

If   $\mathbf{C}^{-1}$ exists,  then \begin{equation}
\mathbf{C}^{-1}=\mathbf{R}_{0}^{-1}\boldsymbol{\rho}^{-1}.\label{invmatcovop}
\end{equation}
It is clear that $\mathbf{R}_{0}^{-1}$ exists if and only if $R_{0}^{-1}$ exists, where \linebreak $\mathbf{R}_{0}^{-1}:=\mbox{diag}\left(R_{0}^{-1},\dots,R_{0}^{-1}\right)_{N\times N},$ with $\mbox{diag}\left(R_{0}^{-1},\dots,R_{0}^{-1}\right)_{N\times N}$ denoting an $N\times N$  diagonal matrix operator with functional diagonal entries equal to $R_{0}^{-1}.$

 Denote  by $\{\phi_{k}\}_{k\geq 1}$ and $\{\lambda_{k}(R_{0})\}_{k\geq 1}$  the eigenvectors and eigenvalues  of $R_{0},$ respectively. The following assumptions are made:

\noindent \textbf{Assumption A1}. The systems of  eigenvalues  of $R_{0}$  satisfy  \linebreak $\lambda_{1}(R_{0})>\lambda_{2}(R_{0})>\dots > \lambda_{m}(R_{0})\dots>0.$

\medskip

 \noindent \textbf{Assumption A2}. The autocorrelation operator $\rho$ of the error term $\varepsilon $ is a self-adjoint compact operator on $H.$
  
\medskip
   
Under    \textbf{Assumption A1}, we can formally define the kernel $k_{R_{0}}$ of the inverse $R_{0}^{-1}$ of $R_{0}$ as 
   $k_{R_{0}}=\sum_{m=1}^{\infty}\frac{1}{\lambda_{m}(R_{0})}\phi_{m}\otimes \phi_{m}$  (see Dautray and Lions, 1985, pp. 112-126).
   Since $R_{0}$ is a trace operator, $\frac{1}{\lambda_{k}(R_{0})}\to \infty,$ as $k\to \infty.$ Hence, a suitable orthonormal basis of $H$ in $R_{0}^{1/2}(H)$ must be found in order to explicitly compute $R_{0}^{-1}(f),$
   for every $f\in H.$ Otherwise, $R_{0}^{-1}$ can only be defined on the Reproducing Kernel Hilbert space (RKHS) $R_{0}^{1/2}(H)$ of $\varepsilon_{n},$  $n\in \mathbb{Z}$ (see Bosq, 2000; Da Prato and   Zabczyk, 2002, Chapter 1, pp. 12--16). 

Under \textbf{Assumption A2}, consider the system of eigenvectors  
$\{\psi_{k}\}_{k\geq 1}$ of the autocorrelation operator $\rho $ satisfying 
\begin{eqnarray}
&&\rho(\psi_{k})= \lambda_{k}(\rho)\psi_{k},\ k\geq 1;\quad
\rho(g)=\sum_{k=1}^{\infty}\lambda_{k}(\rho)\left\langle g,\psi_{k}\right\rangle_{H}\psi_{k},\ \forall g\in H.
\label{eq2asmodel}
\end{eqnarray}
\begin{lemma} \label{lemdr}
Let $\boldsymbol{\rho}$ be the matrix operator introduced in   (\ref{cocopmat}).
Under \textbf{Assumption A2}, $\boldsymbol{\rho}$ admits the following series representation in $H^{N}:$  For every  $\mathbf{f}=(f_{1},\dots,f_{N})^{T},$
\begin{eqnarray}
\boldsymbol{\rho}(\mathbf{f})&=&
\sum_{k\geq 1}\boldsymbol{\Psi}_{k}\left[\begin{array}{cccc}1 & \lambda_{k}(\rho) &        \ldots & \left[\lambda_{k}(\rho)\right]^{N-1}  \\
 \lambda_{k}(\rho)  &1 &  \ldots & \left[\lambda_{k}(\rho)\right]^{N-2}\\
\vdots & \ldots &  \ldots & \vdots\\
\left[\lambda_{k}(\rho)\right]^{N-1}
  & \ldots   &\ldots &1\\
 \end{array}\right]\boldsymbol{\Psi}_{k}^{\star}(\mathbf{f}),
\end{eqnarray}
\noindent  where 
for   $\mathbf{g}=
(g_{1},\dots,g_{N})^{T}\in H^{N},$ and $k\geq 1,$
\begin{eqnarray}\boldsymbol{\Psi}_{k}^{\star}(\mathbf{g})&:=&
\mathrm{diag}\left( \psi_{k},\dots, \psi_{k}\right)_{N\times N}(\mathbf{g})=\mathbf{g}_{k}
\nonumber\\
\boldsymbol{\Psi}_{k}\boldsymbol{\Psi}_{k}^{\star}(\mathbf{g})&=&
\boldsymbol{\Psi}_{k}(\mathbf{g}_{k})=\left[\begin{array}{c} \left\langle g_{1},\psi_{k}\right\rangle_{H}\psi_{k}\\
\left\langle g_{2},\psi_{k}\right\rangle_{H}\psi_{k}\\ \vdots\\ 
  \left\langle g_{N},\psi_{k}\right\rangle_{H}\psi_{k}\end{array}\right]\nonumber\\
   \boldsymbol{\Psi}_{k}^{\star}\boldsymbol{\Psi}_{k}
   &=&\mathrm{diag}\left(\left\langle \psi_{k},\psi_{k}\right\rangle_{H},\dots, \left\langle \psi_{k},\psi_{k}\right\rangle_{H}\right)_{N\times N}=I_{N\times N}.
\label{cocopmat3}
\end{eqnarray}

\noindent Here, $\mathbf{g}_{k}=\left( \left\langle g_{1},\psi_{k}\right\rangle_{H},\dots, \left\langle g_{N},\psi_{k}\right\rangle_{H}\right)^{T},$ $k\geq 1,$ and, as before  $\mathrm{diag}\left( \cdots \right)_{N\times N}$ denotes an $N\times N$ functional diagonal matrix.  Also,   $[\cdot ]^{\star}$ stands for the adjoint of the matrix operator $[\cdot],$ and $I_{N\times N}$ denotes the $N\times N$ identity matrix.
\end{lemma}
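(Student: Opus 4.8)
The plan is to reduce the matrix-operator identity to the spectral decomposition of the single operator $\rho$, applied block by block. Under \textbf{Assumption A2}, the spectral theorem for self-adjoint compact operators yields an orthonormal system $\{\psi_{k}\}_{k\geq 1}$ of eigenvectors of $\rho$, with real eigenvalues $\{\lambda_{k}(\rho)\}_{k\geq 1}$, and the representation (\ref{eq2asmodel}); I take $\{\psi_{k}\}_{k\geq 1}$ to be a \emph{complete} orthonormal basis of $H$, enlarging it, if necessary, by an orthonormal basis of $\ker\rho$ (eigenvectors with eigenvalue zero), which is precisely what is needed to recover the diagonal identity blocks of $\boldsymbol{\rho}$. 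Iterating (\ref{eq2asmodel}) gives, for every integer $m\geq 0$ and every $g\in H$, the power representation $\rho^{m}(g)=\sum_{k\geq 1}[\lambda_{k}(\rho)]^{m}\langle g,\psi_{k}\rangle_{H}\psi_{k}$, where for $m=0$ the convention $[\lambda_{k}(\rho)]^{0}=1$ reproduces $I$ thanks to the completeness of $\{\psi_{k}\}_{k\geq 1}$.

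Next I would expand $\boldsymbol{\rho}(\mathbf{f})$ componentwise. By the definition of $\boldsymbol{\rho}$ in (\ref{cocopmat}), its $i$-th component equals $\sum_{j=1}^{N}\rho^{|i-j|}(f_{j})$. Substituting the power representation above, the $i$-th component becomes $\sum_{j=1}^{N}\sum_{k\geq 1}[\lambda_{k}(\rho)]^{|i-j|}\langle f_{j},\psi_{k}\rangle_{H}\psi_{k}$; interchanging the finite sum over $j\in\{1,\dots,N\}$ with the series over $k$ gives $\sum_{k\geq 1}\bigl(\sum_{j=1}^{N}[\lambda_{k}(\rho)]^{|i-j|}\langle f_{j},\psi_{k}\rangle_{H}\bigr)\psi_{k}$. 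Reading the definitions of $\boldsymbol{\Psi}_{k}^{\star}$ and $\boldsymbol{\Psi}_{k}$ in (\ref{cocopmat3})---namely $\boldsymbol{\Psi}_{k}^{\star}$ extracts the $k$-th Fourier coefficient of each component, $\mathbf{f}\mapsto \mathbf{f}_{k}$, and its adjoint $\boldsymbol{\Psi}_{k}$ multiplies each scalar entry by $\psi_{k}$---this is exactly the $i$-th component of $\sum_{k\geq 1}\boldsymbol{\Psi}_{k}\Lambda_{k}\boldsymbol{\Psi}_{k}^{\star}(\mathbf{f})$, where $\Lambda_{k}=\bigl([\lambda_{k}(\rho)]^{|i-j|}\bigr)_{i,j=1}^{N}$ is the scalar Toeplitz matrix displayed in the statement. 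The remaining identity $\boldsymbol{\Psi}_{k}^{\star}\boldsymbol{\Psi}_{k}=I_{N\times N}$ follows at once from $\langle\psi_{k},\psi_{k}\rangle_{H}=1$.

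The step requiring genuine care is the convergence of the operator series in $H^{N}$, which also legitimates the interchange of summations used above. Since the sum over $j$ is finite, it suffices to establish, for each fixed $i$, the convergence in $H$ of $\sum_{k\geq 1}c_{k,i}\psi_{k}$ with $c_{k,i}=\sum_{j=1}^{N}[\lambda_{k}(\rho)]^{|i-j|}\langle f_{j},\psi_{k}\rangle_{H}$. By the Cauchy--Schwarz inequality and $|\lambda_{k}(\rho)|\leq\|\rho\|_{\mathcal{L}(H)}$, one obtains $|c_{k,i}|^{2}\leq N\max(1,\|\rho\|_{\mathcal{L}(H)}^{2(N-1)})\sum_{j=1}^{N}|\langle f_{j},\psi_{k}\rangle_{H}|^{2}$; summing over $k$ and invoking Parseval's identity (valid because $\{\psi_{k}\}_{k\geq 1}$ is complete) yields $\sum_{k\geq 1}|c_{k,i}|^{2}\leq N\max(1,\|\rho\|_{\mathcal{L}(H)}^{2(N-1)})\sum_{j=1}^{N}\|f_{j}\|_{H}^{2}<\infty$. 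Since $\{\psi_{k}\}_{k\geq 1}$ is orthonormal, this square-summability gives convergence of each component series in $H$, hence convergence of the whole series in the norm of $H^{N}$ and justification of the term-by-term rearrangement, completing the proof.
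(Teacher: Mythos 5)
Your proof is correct and follows essentially the same route as the paper's: substitute the spectral representation $\rho^{j}=\sum_{k}[\lambda_{k}(\rho)]^{j}\psi_{k}\otimes\psi_{k}$ into the block Toeplitz matrix (\ref{cocopmat}) and regroup the terms as $\sum_{k}\boldsymbol{\Psi}_{k}\boldsymbol{\Lambda}_{k}\boldsymbol{\Psi}_{k}^{\star}$. You add two worthwhile refinements the paper leaves implicit---completing $\{\psi_{k}\}$ by a basis of $\ker\rho$ so that the diagonal identity blocks are actually recovered, and the Cauchy--Schwarz/Parseval argument justifying convergence in $H^{N}$---while omitting only the factorization $\boldsymbol{\Lambda}_{k}=\mathbf{A}_{k}^{T}\mathbf{A}_{k}$, which is not needed for this lemma and is used by the paper only in the subsequent inversion argument.
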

\begin{proof} Under \textbf{Assumption A2}, from equation (\ref{eq2asmodel}), considering the identity  $$\rho^{j}=\sum_{k=1}^{\infty}\left[\lambda_{k}(\rho)\right]^{j}\psi_{k}\otimes \psi_{k},\quad j=1,\dots, N-1,$$
\noindent   in equation (\ref{cocopmat}), for 
$\mathbf{f}=(f_{1},\dots,f_{N})^{T},$ $\boldsymbol{\rho} $ can then be expressed as 
\begin{eqnarray}
\boldsymbol{\rho}(\mathbf{f})
 &=&\sum_{k\geq 1}\left[\begin{array}{cccc}
\psi_{k} & 0 & \ldots & 0\\
0 &\psi_{k} & \ldots &0\\
\vdots & \ddots & \ddots &\vdots \\
0 & \ldots & \ldots &\psi_{k}\\ 
\end{array}\right]\left[\begin{array}{cccc}1 & \lambda_{k}(\rho) &        \ldots & [\lambda_{k}(\rho)]^{N-1}  \\
 \lambda_{k}(\rho)  &1 &  \ldots & [\lambda_{k}(\rho)]^{N-2}\\
\vdots & \ldots &  \ldots & \vdots\\ \left[\lambda_{k}(\rho)\right]^{N-1}   & \ldots   &\ldots &1\\
 \end{array}\right]
\nonumber\\ 
&\times &\left[\begin{array}{cccc}
\psi_{k} & 0 & \ldots & 0\\
0 &\psi_{k} & \ldots &0\\
\vdots & \ddots & \ddots &\vdots \\
0 & \ldots & \ldots &\psi_{k}\\ 
\end{array}\right]^{\star}\left[\begin{array}{c}f_{1}\\f_{2}\\ \vdots\\ f_{N}\end{array}\right]\nonumber\\&& \nonumber\\
&=&\sum_{k\geq 1}\boldsymbol{\Psi}_{k}
\boldsymbol{\Lambda}_{k}\boldsymbol{\Psi}_{k}^{\star}(\mathbf{f}),  
 \label{cocopmat2}
\end{eqnarray}
\noindent where $$\boldsymbol{\Lambda}_{k}:=
\left[\begin{array}{cccc}1 & \lambda_{k}(\rho) &        \ldots & \left[\lambda_{k}(\rho)\right]^{N-1}  \\
 \lambda_{k}(\rho)  &1 &  \ldots & \left[\lambda_{k}(\rho)\right]^{N-2}\\
\vdots & \ldots &  \ldots & \vdots\\
\left[\lambda_{k}(\rho)\right]^{N-1}
  & \ldots   &\ldots &1\\
 \end{array}\right],\quad k\geq 1.$$

Then, 
\begin{eqnarray}
 \boldsymbol{\Psi}_{k}^{\star}\boldsymbol{\rho}\boldsymbol{\Psi}_{k}&=&
  \boldsymbol{\Lambda}_{k}=\mathbf{A}_{k}^{T}\mathbf{A}_{k},\quad k\geq 1.\nonumber
\end{eqnarray}
\noindent with \begin{equation}\mathbf{A}_{k}=\left[
 \begin{array}{ccccc}
  1 & \lambda_{k}(\rho) \ & \ \lambda_{k}^{2}(\rho) \ & \ \ldots \ &\ \lambda_{k}^{N-1}(\rho)\\
 0 \  &  \ \sqrt{1-\lambda_{k}^{2}(\rho)} \ &  \
 \frac{-\lambda_{k}^{3}(\rho)+\lambda_{k}(\rho)}{\sqrt{1-\lambda_{k}^{2}(\rho)}} \ & \ \ldots \ & \ \frac{-\lambda_{k}^{N}(\rho)+\lambda_{k}^{N-2}(\rho)}{\sqrt{1-\lambda_{k}^{2}(\rho)}}\\
 \vdots \ & \ \ldots \ & \ \ddots \ & \ \ldots \ & \ \vdots \\ 
 \vdots \ & \ \ldots \  & \ \ldots \ & \ \ddots \ & \ \vdots \\
0 \ & \ 0 \ & \ \ldots \ & \ \sqrt{1-\lambda_{k}^{2}(\rho)} \ & \ \frac{-\lambda_{k}^{3}(\rho)+\lambda_{k}(\rho)}{\sqrt{1-\lambda_{k}^{2}(\rho)}}\\
0 \ & \ 0  \ & \ \ldots  \ & \ \ldots  \ & \ \sqrt{1-\lambda_{k}^{2}(\rho)}
  \end{array}
  \right]\label{chfact}
  \end{equation}
\end{proof}

\begin{remark} 
 From  Lemma \ref{lemdr}, $\boldsymbol{\rho}$ admits an 
 infinite--dimensional block diagonal representation, with respect to the orthonormal matrix functional system 
$\{\boldsymbol{\Psi}_{k}\}_{k\geq 1},$ with matrix diagonal entries $\boldsymbol{\Lambda}_{k},$ $k\geq 1.$
 Equivalently,  for $k\geq 1,$
 \begin{eqnarray}
\boldsymbol{\Lambda}_{k}  &=& E\left[\left(\left\langle (Y_{1}-\mu_{1,\mathcal{X}}),\psi_{k}\right\rangle_{H},\dots,  \left\langle (Y_{N}-\mu_{N,\mathcal{X}}),\psi_{k}\right\rangle_{H}\right)^{T}\right.\\ && \hspace*{0.5cm}\times \left.\left(\left\langle (Y_{1}-\mu_{1,\mathcal{X}}),\psi_{k}\right\rangle_{H},\dots,  \left\langle(Y_{N}-\mu_{N,\mathcal{X}}),\psi_{k}\right\rangle_{H}\right)\right]\left[\boldsymbol{\Psi}_{k}^{\star}\mathbf{R_{0}}\boldsymbol{\Psi}_{k}\right]^{-1}\nonumber\\
&=&E\left[\left(\left\langle\varepsilon_{1},\psi_{k}\right\rangle_{H}\dots,\left\langle\varepsilon_{N},\psi_{k}\right\rangle_{H}\right)^{T}\left(\left\langle\varepsilon_{1},\psi_{k}\right\rangle_{H},\dots,\left\langle\varepsilon_{N},\psi_{k}\right\rangle_{H}\right)\right]\left[\boldsymbol{\Psi}_{k}^{\star}\mathbf{R_{0}}\boldsymbol{\Psi}_{k}\right]^{-1}.\nonumber \end{eqnarray}

\end{remark}
 The following lemma will be applied in the formal  definition of 
the  norm of the  RKHS of $\varepsilon,$  in model (\ref{modelreg}), defining  the quadratic loss function in equation  (\ref{lossfunction}) below, involved in the computation of the generalized least-squared estimator $\widehat{\boldsymbol{\beta}}_{N}$ of parameter $\boldsymbol{\beta },$ in the next section. 
 \begin{lemma}
 For $i,j=1\dots N,$ the functional entries $\widetilde{\rho}_{i,j}$ of   $\boldsymbol{\rho}^{-1}=\left(\widetilde{\rho}_{i,j}\right)_{i,j=1\dots N}$ are formally given by: 
\begin{eqnarray}
\widetilde{\rho}_{1,1} &=& \widetilde{\rho}_{N,N}=(I-\rho^{2})^{-1}\nonumber\\
\widetilde{\rho}_{i,i+1} &=&\widetilde{\rho}_{j,j-1}
=-(I-\rho^{2})^{-1}\rho,\quad i=1,\dots, N-1,\ j=2,\dots, N\nonumber\\
\widetilde{\rho}_{i,i}&=&
(I-\rho^{2})^{-1}(I+\rho^{2}),\quad i=2,\dots, N-1.
 \label{invmco}
 \end{eqnarray}
 \end{lemma}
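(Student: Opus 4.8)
The plan is to exploit the block spectral decomposition of $\boldsymbol{\rho}$ established in Lemma \ref{lemdr} and to reduce the inversion to the scalar AR(1) case. Since $\{\boldsymbol{\Psi}_{k}\}_{k\geq 1}$ is an orthonormal matrix functional system, with $\boldsymbol{\Psi}_{k}^{\star}\boldsymbol{\Psi}_{l}=\delta_{k,l}I_{N\times N}$ by the third identity in (\ref{cocopmat3}), the representation $\boldsymbol{\rho}=\sum_{k\geq 1}\boldsymbol{\Psi}_{k}\boldsymbol{\Lambda}_{k}\boldsymbol{\Psi}_{k}^{\star}$ is block--diagonal, and I would invert it termwise, obtaining $\boldsymbol{\rho}^{-1}=\sum_{k\geq 1}\boldsymbol{\Psi}_{k}\boldsymbol{\Lambda}_{k}^{-1}\boldsymbol{\Psi}_{k}^{\star}$. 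This is legitimate because, by the norm condition on $\rho$ together with \textbf{Assumption A2}, one has $|\lambda_{k}(\rho)|\leq \|\rho\|_{\mathcal{L}(H)}<1$, so each $\boldsymbol{\Lambda}_{k}^{-1}$ exists and $1-\lambda_{k}^{2}(\rho)$ is bounded below by $1-\|\rho\|_{\mathcal{L}(H)}^{2}>0$, uniformly in $k$.

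Next I would compute the scalar inverse $\boldsymbol{\Lambda}_{k}^{-1}$. Since $\boldsymbol{\Lambda}_{k}$ is exactly the correlation matrix of a scalar AR(1) process with parameter $\lambda_{k}(\rho)$, its inverse is the classical tridiagonal matrix
\[
\boldsymbol{\Lambda}_{k}^{-1}=\frac{1}{1-\lambda_{k}^{2}(\rho)}
\begin{pmatrix}
1 & -\lambda_{k}(\rho) & & & \\
-\lambda_{k}(\rho) & 1+\lambda_{k}^{2}(\rho) & -\lambda_{k}(\rho) & & \\
 & \ddots & \ddots & \ddots & \\
 & & -\lambda_{k}(\rho) & 1+\lambda_{k}^{2}(\rho) & -\lambda_{k}(\rho)\\
 & & & -\lambda_{k}(\rho) & 1
\end{pmatrix},
\]
which can be read off directly from the Cholesky factorization $\boldsymbol{\Lambda}_{k}=\mathbf{A}_{k}^{T}\mathbf{A}_{k}$ in (\ref{chfact}) via $\boldsymbol{\Lambda}_{k}^{-1}=\mathbf{A}_{k}^{-1}(\mathbf{A}_{k}^{-1})^{T}$, the factor $\mathbf{A}_{k}$ being upper bidiagonal so that $\mathbf{A}_{k}^{-1}$ is explicit. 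In particular the corner entries ($1,1$ and $N,N$) carry the weight $1/(1-\lambda_{k}^{2}(\rho))$, the interior diagonal entries carry $(1+\lambda_{k}^{2}(\rho))/(1-\lambda_{k}^{2}(\rho))$, the first sub/super-diagonals carry $-\lambda_{k}(\rho)/(1-\lambda_{k}^{2}(\rho))$, and all entries with $|i-j|\geq 2$ vanish.

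Finally I would recombine the scalar entries into operator form. Because $\rho$ is self-adjoint and compact with spectral decomposition $\rho=\sum_{k}\lambda_{k}(\rho)\,\psi_{k}\otimes\psi_{k}$ in (\ref{eq2asmodel}), for any bounded Borel function $g$ the functional calculus gives $\sum_{k}g(\lambda_{k}(\rho))\,\psi_{k}\otimes\psi_{k}=g(\rho)$. Collecting the $(i,j)$-block of $\boldsymbol{\rho}^{-1}=\sum_{k}\boldsymbol{\Psi}_{k}\boldsymbol{\Lambda}_{k}^{-1}\boldsymbol{\Psi}_{k}^{\star}$ amounts to the single sum $\sum_{k}(\boldsymbol{\Lambda}_{k}^{-1})_{i,j}\,\psi_{k}\otimes\psi_{k}$; applying the functional calculus with $g(x)=1/(1-x^{2})$, $g(x)=(1+x^{2})/(1-x^{2})$ and $g(x)=-x/(1-x^{2})$ yields respectively $(I-\rho^{2})^{-1}$, $(I-\rho^{2})^{-1}(I+\rho^{2})$ and $-(I-\rho^{2})^{-1}\rho$, which are exactly the entries $\widetilde{\rho}_{i,j}$ claimed in (\ref{invmco}), all other blocks being zero.

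The main obstacle is bookkeeping and rigor rather than a deep idea: I must track carefully the distinction between the corner and interior diagonal entries produced by $\boldsymbol{\Lambda}_{k}^{-1}$, and I must justify interchanging the sum over $k$ with the formation of each block entry, i.e. the operator-norm convergence of $\sum_{k}(\boldsymbol{\Lambda}_{k}^{-1})_{i,j}\,\psi_{k}\otimes\psi_{k}$, which follows from the uniform bound on $(\boldsymbol{\Lambda}_{k}^{-1})_{i,j}$ and the orthonormality of $\{\psi_{k}\}$. I would also stress the word \emph{formally}: although $\boldsymbol{\rho}^{-1}$ itself is a bounded operator, the object of ultimate interest $\mathbf{C}^{-1}=\mathbf{R}_{0}^{-1}\boldsymbol{\rho}^{-1}$ inherits the unboundedness of $\mathbf{R}_{0}^{-1}$, and is therefore only densely defined on the RKHS $R_{0}^{1/2}(H)$, as discussed after \textbf{Assumption A1}. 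Alternatively, the statement can be verified directly, without the spectral route, by checking $\boldsymbol{\rho}^{-1}\boldsymbol{\rho}=I_{N\times N}$ blockwise: since every block is a function of the single operator $\rho$, all blocks commute and the product telescopes exactly as in the scalar tridiagonal identity.
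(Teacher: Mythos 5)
Your proposal is correct and follows essentially the same route as the paper: invert the block--diagonal representation $\boldsymbol{\rho}=\sum_{k}\boldsymbol{\Psi}_{k}\boldsymbol{\Lambda}_{k}\boldsymbol{\Psi}_{k}^{\star}$ termwise, compute the classical tridiagonal inverse $\boldsymbol{\Lambda}_{k}^{-1}=\mathbf{A}_{k}^{-1}[\mathbf{A}_{k}^{T}]^{-1}$ from the Cholesky factor, and reassemble the scalar entries into operator form by spectral calculus for the self-adjoint compact operator $\rho$. Your added justifications (the uniform lower bound on $1-\lambda_{k}^{2}(\rho)$ via \textbf{Assumption A2}, and the convergence of the blockwise sums) only make explicit what the paper leaves implicit, so no substantive difference remains.
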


  \begin{proof}  
  Operator $\boldsymbol{\rho}$ is invertible if and only if  $[\boldsymbol{\Lambda}_{k}]_{N\times N},$  is invertible, for $k\geq  1.$ The inverse $\boldsymbol{\rho}^{-1}$ then admits an infinite-dimensional block diagonal representation with respect to $\{\boldsymbol{\Psi}_{k}\}_{k\geq 1},$ with matrix diagonal entries   
$$\boldsymbol{\Lambda}_{k}^{-1}=\left[\begin{array}{cccc}1 & \lambda_{k}(\rho) &        \ldots & \left[\lambda_{k}(\rho)\right]^{N-1}  \\
 \lambda_{k}(\rho)  &1 &  \ldots & \left[\lambda_{k}(\rho)\right]^{N-2}\\
\vdots & \ldots &  \ddots & \vdots\\
\left[\lambda_{k}(\rho)\right]^{N-1}
  & \ldots   &\ldots &1\\
 \end{array}\right]^{-1}_{N\times N}=\left[\mathbf{A}_{k}^{T}\mathbf{A}_{k}\right]^{-1}= \mathbf{A}_{k}^{-1}[\mathbf{A}_{k}^{T}]^{-1},$$  \noindent   where  
\begin{equation}\mathbf{A}_{k}^{-1}=\frac{1}{\sqrt{1-\lambda_{k}^{2}(\rho )}}\left[\begin{array}{ccccc} \sqrt{1-\lambda_{k}^{2}(\rho)} & \  -\lambda_{k}(\rho) & 0   & \ldots    & 0  \\
 0  &1 & \  -\lambda_{k}(\rho) & \ldots  &  0 \\
 \vdots & \ldots &  \ldots &  \ddots & \vdots\\
0 &0 &  \ldots & 1 & \ -\lambda_{k}(\rho)  \\
0 &0 &  \ldots & \ldots &1\\
 \end{array}\right]_{N\times N},\ k\geq 1\label{eidbm}
\end{equation}
\noindent (see, for example,  Fitzmaurice et al., 2004). Thus,   $\boldsymbol{\rho}^{-1}$ in 
(\ref{invmatcovop})  admits the following series representation: For every $\mathbf{f}=(f_{1},\dots,f_{N})^{T}\in H^{N},$
\begin{eqnarray}
\boldsymbol{\rho}^{-1}(\mathbf{f})
   &=&\sum_{k\geq 1}\boldsymbol{\Psi}_{k}\boldsymbol{\Lambda}_{k}^{-1}\boldsymbol{\Psi}_{k}^{\star}(\mathbf{f}),\label{sriamo}
 \end{eqnarray}
 \noindent where, for each $k\geq 1,$ the $N\times N$ matrix 
 $\boldsymbol{\Lambda}_{k}^{-1}$ is given by
\begin{equation} 
\boldsymbol{\Lambda}_{k}^{-1}=\frac{1}{1-\lambda_{k}^{2}(\rho)} \left[\begin{array}{cccccc} 1 & \ -\lambda_{k}(\rho)  & 0 & \ldots & \ldots & 0  \\
-\lambda_{k}(\rho)  & \ 1+\lambda^{2}_{k}(\rho)& \ -\lambda_{k}(\rho)& 0 & \ldots &0  \\
 \vdots & \ldots &  \ldots & \ldots & \vdots & \vdots\\
 0& \ldots &  \ldots & -\lambda_{k}(\rho) & \ 1+\lambda^{2}_{k}(\rho)&\ -\lambda_{k}(\rho) \\
0 &0 &  \ldots & \ldots & -\lambda_{k}(\rho)  &1\\
 \end{array}\right].\label{rhostrproy} 
 \end{equation}

 From (\ref{sriamo})--(\ref{rhostrproy}), using Spectral Theorems on Spectral Calculus for continuous functions of self-adjoint operators on a Hilbert space (see Dautray and Lions 1985, pp. 112-126, for continuous functions,  and p. 140, for the unbounded case),
we obtain that, for $i,j=1\dots N,$ the functional entries $\widetilde{\rho}_{i,j}$ of   $\boldsymbol{\rho}^{-1}=\left(\widetilde{\rho}_{i,j}\right)_{i,j=1\dots N}$ are defined as in equation (\ref{invmco}).
 
\end{proof}  
  
\bigskip  
  
 From (\ref{invmatcovop})--(\ref{invmco}), the functional entries  $\widetilde{C}_{ij},$ $i,j=1,\dots,N,$ of \linebreak $\mathbf{C}^{-1}=\left(\widetilde{C}_{ij}\right)_{i,j=1\dots,N}$ are formally defined as
 \begin{eqnarray}
\widetilde{C}_{1,1} &=& \widetilde{C}_{N,N}=R_{0}^{-1}(I-\rho^{2})^{-1}\nonumber\\
\widetilde{C}_{i,i+1}&=&\widetilde{C}_{j,j-1}=  
-R_{0}^{-1}(I-\rho^{2})^{-1}\rho,\quad i=1,\dots, N-1,
\ j=2,\dots, N\nonumber\\
\widetilde{C}_{i,i}&=&
R_{0}^{-1}(I-\rho^{2})^{-1}(I+\rho^{2}),\quad i=2,\dots, N-1.
 \label{repmatfv}
 \end{eqnarray}

The following additional assumption is now  considered:

\medskip
 
 \noindent \textbf{Assumption A3.} The eigenvectors $\{ \psi_{k}\}_{k\geq 1}$ of $\rho$ satisfy  $\{ \psi_{k}\}_{k\geq 1}\subset R_{0}^{1/2}(H).$ 
 
\medskip

Under \textbf{Assumption A3}, the next lemma provides the series expansion of the functional entries of  $\mathbf{C}^{-1},$ leading to  the derivation below of the generalized least-squared estimator $\widehat{\boldsymbol{\beta}}_{N}$ of $\boldsymbol{\beta},$ under \textbf{Assumption A4}.
 \begin{lemma}
Under  \textbf{Assumption A3}, since $\psi_{k}\in \rho(H),$ for every $k\geq 1,$  the functional entries of matrix operator in (\ref{repmatfv}) admit the following series expansion in the norm of $H:$
 
 \begin{eqnarray}
\widetilde{C}_{1,1}(f)&=&\widetilde{C}_{N,N}(f)= R_{0}^{-1}(I-\rho^{2})^{-1}(f)\nonumber\\ &=& \sum_{k,l}
 \frac{1}{1-\lambda_{k}^{2}(\rho)}R_{0}^{-1}(\psi_{k})(\psi_{l})\left\langle \psi_{k},f\right\rangle_{H}\psi_{l}\nonumber\\
 &=& \sum_{k,l}a_{l,k}\left\langle \psi_{k},f\right\rangle_{H}\psi_{l} ,\quad \forall f\in H\nonumber\\ \widetilde{C}_{i,i+1}(f)&=&\widetilde{C}_{j,j-1}(f)= -R_{0}^{-1}(I-\rho^{2})^{-1}\rho (f) 
\nonumber\\
&=& -\sum_{k,l}\frac{\lambda_{k}(\rho)}{1-\lambda_{k}^{2}(\rho)}
 R_{0}^{-1}(\psi_{k})(\psi_{l})\left\langle \psi_{k},f\right\rangle_{H}\psi_{l}\nonumber \\
 &=& \sum_{k,l}b_{l,k}\left\langle \psi_{k},f\right\rangle_{H}\psi_{l} ,\ \forall f\in H,\ i=1,\dots, N-1,
\ j=2,\dots, N\nonumber\\
\widetilde{C}_{i,i}(f)&=& R_{0}^{-1}(I-\rho^{2})^{-1}(I+\rho^{2})(f) =\sum_{k,l}
 \frac{1+\lambda_{k}^{2}(\rho)}{1-\lambda_{k}^{2}(\rho)}
 R_{0}^{-1}(\psi_{k})(\psi_{l})
  \left\langle \psi_{k},f\right\rangle_{H}\psi_{l}\nonumber\\ 
 &=& \sum_{k,l}c_{l,k}\left\langle \psi_{k},f\right\rangle_{H}\psi_{l} ,\quad \forall f\in H,\quad i=2,\dots, N-1.
   \label{sefecinh}
 \end{eqnarray}
 \end{lemma}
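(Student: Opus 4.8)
The plan is to begin from the closed operator expressions for the distinct functional entries recorded in (\ref{repmatfv}): $\widetilde{C}_{1,1}=\widetilde{C}_{N,N}=R_{0}^{-1}(I-\rho^{2})^{-1}$, $\widetilde{C}_{i,i+1}=\widetilde{C}_{j,j-1}=-R_{0}^{-1}(I-\rho^{2})^{-1}\rho$, and $\widetilde{C}_{i,i}=R_{0}^{-1}(I-\rho^{2})^{-1}(I+\rho^{2})$, and to expand each one by composing the spectral representation of the relevant function of $\rho$ with the inverse $R_{0}^{-1}$. Each entry is of the form $R_{0}^{-1}g(\rho)$ for a continuous scalar function $g$, namely $g_{1}(x)=(1-x^{2})^{-1}$, $g_{2}(x)=x(1-x^{2})^{-1}$, and $g_{3}(x)=(1+x^{2})(1-x^{2})^{-1}$. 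First I would observe that, under \textbf{Assumption A2}, $\rho$ is self-adjoint and compact, and the norm condition $\|\rho\|_{\mathcal{L}(H)}^{k}<1$ forces $|\lambda_{k}(\rho)|\leq \|\rho\|_{\mathcal{L}(H)}<1$ for all $k\geq 1$; hence $1-\lambda_{k}^{2}(\rho)>0$ and each $g_{i}$ is continuous on the spectrum of $\rho$.

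Next, using the Spectral Theorem for continuous functions of self-adjoint compact operators (Dautray and Lions, 1985, pp. 112--126) together with the eigenvector expansion (\ref{eq2asmodel}), I would write, for each fixed $f\in H$ and $i=1,2,3$, the $H$-norm convergent series
\[
g_{i}(\rho)(f)=\sum_{k\geq 1}g_{i}(\lambda_{k}(\rho))\left\langle \psi_{k},f\right\rangle_{H}\psi_{k},
\]
which converges since $\sup_{k}|g_{i}(\lambda_{k}(\rho))|<\infty$. Applying $R_{0}^{-1}$ term by term and re-expanding $R_{0}^{-1}(\psi_{k})$ along the orthonormal system $\{\psi_{l}\}_{l\geq 1}$ then gives
\[
R_{0}^{-1}g_{i}(\rho)(f)=\sum_{k,l}g_{i}(\lambda_{k}(\rho))\left\langle R_{0}^{-1}(\psi_{k}),\psi_{l}\right\rangle_{H}\left\langle \psi_{k},f\right\rangle_{H}\psi_{l}.
\]
Substituting $g_{1}$, $g_{2}$, $g_{3}$ and carrying the sign of $\widetilde{C}_{i,i+1}$ then reproduces exactly the three double series in (\ref{sefecinh}), with the coefficients $a_{l,k}$, $b_{l,k}$, $c_{l,k}$ read off as $g_{i}(\lambda_{k}(\rho))\,\langle R_{0}^{-1}(\psi_{k}),\psi_{l}\rangle_{H}$ up to the indicated sign.

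The step I expect to be the main obstacle is the legitimacy of inserting the unbounded operator $R_{0}^{-1}$ inside the spectral sum. Because $R_{0}$ is a trace (nuclear) operator, its eigenvalues $\lambda_{m}(R_{0})$ tend to zero, so $R_{0}^{-1}$, whose formal kernel is $k_{R_{0}}=\sum_{m}\lambda_{m}^{-1}(R_{0})\,\phi_{m}\otimes\phi_{m}$, is unbounded and cannot be interchanged with an infinite series by mere continuity. This is where \textbf{Assumption A3} is essential: the inclusion $\{\psi_{k}\}_{k\geq 1}\subset R_{0}^{1/2}(H)$ guarantees $R_{0}^{-1/2}(\psi_{k})\in H$ for every $k$, so that each pairing is a finite reproducing-kernel inner product,
\[
\left\langle R_{0}^{-1}(\psi_{k}),\psi_{l}\right\rangle_{H}=\left\langle R_{0}^{-1/2}(\psi_{k}),R_{0}^{-1/2}(\psi_{l})\right\rangle_{H}.
\]
Carrying out the whole computation inside the reproducing kernel Hilbert space $R_{0}^{1/2}(H)$ of $\varepsilon_{n}$, on which $R_{0}^{-1}$ is the well-defined Riesz isomorphism, justifies the term-by-term application of $R_{0}^{-1}$ and the $H$-norm convergence of the resulting double series, thereby completing the identification of the coefficients in (\ref{sefecinh}).
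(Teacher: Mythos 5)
Your proposal is correct and follows essentially the same route as the paper, whose own proof is a one-line appeal to \textbf{Assumption A3} and the Spectral Theorem for compact self-adjoint operators (Dautray and Lions, 1985, pp.\ 112--126); you simply spell out the spectral expansion of $g_{i}(\rho)$, the term-by-term application of $R_{0}^{-1}$, and the reading of $R_{0}^{-1}(\psi_{k})(\psi_{l})$ as $\left\langle R_{0}^{-1/2}(\psi_{k}),R_{0}^{-1/2}(\psi_{l})\right\rangle_{H}$. Your identification of \textbf{Assumption A3} as the device that makes these pairings finite, despite the unboundedness of $R_{0}^{-1}$, is exactly the role the paper assigns to it.
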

 
 The proof follows  from \textbf{Assumption A3}, and Spectral Theorems for compact self-adjoint operators  (see Dautray and Lions 1985, pp. 112-126).
 
\medskip
 
 From (\ref{invmatcovop})--(\ref{sefecinh}),  
 $\mathbf{C}^{-1}$  then admits the following series representation: For every $\mathbf{f}=(f_{1},
 \dots,f_{N})^{T},$  $\mathbf{g}=(g_{1},\dots,g_{N})^{T}\in H^{N},$
\begin{eqnarray}
\mathbf{C}^{-1}(\mathbf{f})(\mathbf{g}) &=& \sum_{k,l}[\boldsymbol{\Psi}_{l}^{\star}(\mathbf{g})]^{T}
\mathbf{H}_{l,k}\boldsymbol{\Psi}_{k}^{\star}
(\mathbf{f})\label{eqmcoCinv}\\
\mathbf{H}_{l,k}&:=&\left[\begin{array}{ccccc} a_{l,k} & b_{l,k}  & 0 & \ldots   & 0  \\
b_{l,k}  &   c_{l,k} &  b_{l,k}&  \ldots  &  0 \\
 \vdots & \ddots &  \ddots  & \ddots & \vdots\\
 0& \ldots  & b_{l,k} &   c_{l,k} & b_{l,k} \\
0 &0 &  \ldots &   b_{l,k}  & 
a_{l,k}\\
 \end{array}\right],\label{sdcinvcoo}
\end{eqnarray}
\noindent where $a_{l,k},b_{l,k},c_{l,k},$ $k,l\geq 1,$ have been introduced in (\ref{sefecinh}).
Equations  (\ref{eqmcoCinv})--(\ref{sdcinvcoo}) define the norm in the RKHS $\mathcal{H}(\boldsymbol{\varepsilon})$  of $\boldsymbol{\varepsilon}=(\varepsilon_{1},\dots, \varepsilon_{N})^{T},$
given by
 \begin{equation}
\|\mathbf{f}\|_{\mathcal{H}(\boldsymbol{\varepsilon})}^{2}=\mathbf{C}^{-1}(\mathbf{f})(\mathbf{f})=
\sum_{k,l}[\boldsymbol{\Psi}_{l}^{\star}(\mathbf{f})]^{T}
\mathbf{H}_{l,k}\boldsymbol{\Psi}_{k}^{\star}(\mathbf{f}),\quad \forall \mathbf{f}\in \mathcal{H}(\boldsymbol{\varepsilon}).
\label{sdcinvcoo2}
 \end{equation}
  \section{Functional regression parameter estimation}
\label{sec3b}
From a functional sample  $Y_{1},\dots ,Y_{N},$ the functional parameter vector $\boldsymbol{\beta}$ in    (\ref{modelreg}) is estimated  by applying generalized least-squared. Thus, considering  (\ref{sdcinvcoo2}), this estimator is computed as the solution to the  minimization problem
\begin{eqnarray} \widehat{\boldsymbol{\beta }}_{N}&:=&\min_{\boldsymbol{\beta}\in H^{p}} L^{2}(\boldsymbol{\beta})=\min_{\boldsymbol{\beta}\in H^{p}}\|\mathbf{Y}-\mathbf{X}(\boldsymbol{\beta })\|_{\mathcal{H}(\boldsymbol{\varepsilon})}^{2}\nonumber \\
&=&\min_{\boldsymbol{\beta}\in H^{p}}(\mathbf{Y}-\mathbf{X}(\boldsymbol{\beta }))^{T}\mathbf{C}^{-1}(\mathbf{Y}-\mathbf{X}(\boldsymbol{\beta }))\nonumber \\
&=&\min_{\boldsymbol{\beta}\in H^{p}}\sum_{k,l}[\boldsymbol{\Psi}_{l}^{\star}(\mathbf{Y}-\mathbf{X}(\boldsymbol{\beta })]^{T}
\mathbf{H}_{l,k}\boldsymbol{\Psi}_{k}^{\star}(\mathbf{Y}-\mathbf{X}(\boldsymbol{\beta })),
\label{lossfunction}
\end{eqnarray}
\noindent where \begin{eqnarray}\mathbf{X}&:=&\left[\begin{array}{c}\mathbf{X}_{1}^{T}\\
\vdots\\
\mathbf{X}_{N}^{T}\\\end{array}\right]=\left[\begin{array}{ccc}X_{1}^{1}&\dots & X_{1}^{p}\\
\vdots &\vdots &\vdots\\
X_{N}^{1}& \dots &X_{N}^{p}\\
\end{array}\right]=\left[\mathbf{X}^{1},\dots,  \mathbf{X}^{p}\right]
\nonumber\\
\mathbf{X}_{i}^{T}&:=&(X_{i}^{1},\dots , X_{i}^{p}),\quad  i=1,\dots,N,\nonumber\\
\mathbf{X}^{j}&=&(X^{j}_{1},\dots,X^{j}_{N})^{T},\quad j=1,\dots,p
\label{dmfcr} \end{eqnarray}\begin{eqnarray}
X_{n}^{i}(f)(g)&=&\sum_{k,l}x^{i}_{k,l}(n)\left\langle f,\psi_{l}\right\rangle_{H}\left\langle g,\psi_{k}\right\rangle_{H},\nonumber\\ &&  \forall f,g\in H,\quad i=1,\dots,p,\ n=1,\dots,N  \label{regressors2b}\\
\mathbf{Y} &:=&\left( \mathbf{Y}_{1},\dots,  \mathbf{Y}_{N}\right)^{T}
\quad \boldsymbol{\beta} = \left( \boldsymbol{\beta}_{1},\dots,  \boldsymbol{\beta}_{p}\right)^{T}.
\label{elemenmp}
\end{eqnarray}

Consider \begin{eqnarray}\boldsymbol{\beta}&=&\left( \sum_{k\geq 1}\left\langle \beta_{1},\psi_{k}\right\rangle_{H}\psi_{k},\dots,\sum_{k\geq 1}\left\langle \beta_{p},\psi_{k}\right\rangle_{H}\psi_{k}\right)^{T}\nonumber\\&=&\left( \sum_{k\geq 1}\beta_{1k}\psi_{k},\dots,\sum_{k\geq 1}\beta_{pk}\psi_{k}\right)^{T},\nonumber\end{eqnarray} \noindent and the following assumption: 

\medskip

\noindent \textbf{Assumption A4.}
Assume the regularity conditions  ensuring the following identities  hold:
\begin{eqnarray}
\frac{\partial \boldsymbol{\Psi}_{k}^{\star}
\mathbf{X}(\boldsymbol{\beta })}{\partial \beta_{j_{0}h_{0}}}&=&
\left(\sum_{j=1}^{p} \sum_{h=1}^{\infty }\frac{\partial
 x^{j}_{k,h}(1)\beta_{jh}}{\partial \beta_{j_{0}h_{0}}},\dots,
 \sum_{j=1}^{p} \sum_{h=1}^{\infty }\frac{\partial
 x^{j}_{k,h}(N)\beta_{jh}}{\partial \beta_{j_{0}h_{0}}}\right)^{T}
 \nonumber\\ &=&\left(
 x^{j_{0}}_{k,h_{0}}(1),\dots, x^{j_{0}}_{k,h_{0}}(N) \right)^{T},\label{a4}
\end{eqnarray}
\noindent  with uniform convergence with respect to $k\geq 1,$  for  $j_{0}=1,\dots,p,$  and $h_{0}\geq 1.$

\medskip

\noindent Under \textbf{Assumption A4}, denote, for $j_{0}=1,\dots,p,$
 \begin{eqnarray}\frac{\partial \boldsymbol{\Psi}_{k}^{\star}
\mathbf{X}(\boldsymbol{\beta })}{\partial \boldsymbol{\beta}_{j_{0}}}&=&
\left(\left(\sum_{j=1}^{p} \sum_{h=1}^{\infty }\frac{\partial
 x^{j}_{k,h}(1)\beta_{jh}}{\partial \beta_{j_{0}h_{0}}}\right)_{h_{0}\geq 1},\dots,
 \left(\sum_{j=1}^{p} \sum_{h=1}^{\infty }\frac{\partial
 x^{j}_{k,h}(N)\beta_{jh}}{\partial \beta_{j_{0}h_{0}}}\right)_{h_{0}\geq 1}\right)^{T}\nonumber\\
 &=&\left(
 \left(x^{j_{0}}_{k,h_{0}}(1)\right)_{h_{0}\geq 1},\dots, \left(x^{j_{0}}_{k,h_{0}}(N)\right)_{h_{0}\geq 1} \right)^{T}\equiv  
\boldsymbol{\Psi}_{k}^{\star}\mathbf{X}^{j_{0}},\label{partialb}
\end{eqnarray}
\noindent  where $\mathbf{X}^{j_{0}}$ has been introduced in equations 
(\ref{dmfcr})--(\ref{regressors2b}), and $\equiv$ denotes the identification  $[l^{2}]^{N}\equiv H^{N}$ established   by the isometry defined in terms of the orthonormal basis $\{\psi_{k}\}_{k\geq 1}.$
 Then, under \textbf{Assumption A4},  from equations (\ref{lossfunction})--(\ref{partialb}),  for each $j_{0}=1,\dots,p,$  

 \begin{eqnarray}&&\frac{\partial \|\mathbf{Y}-\mathbf{X}(\boldsymbol{\beta }) \|^{2}_{\mathcal{H}(\boldsymbol{\varepsilon})}}{\partial \boldsymbol{\beta}_{j_{0}}}= \sum_{k,l}\frac{\partial [\boldsymbol{\Psi}_{l}^{\star}(\mathbf{Y}-\mathbf{X}(\boldsymbol{\beta })]^{T}
\mathbf{H}_{l,k}\boldsymbol{\Psi}_{k}^{\star}(\mathbf{Y}-\mathbf{X}(\boldsymbol{\beta }))}{\partial \boldsymbol{\beta }_{j_{0}}}
\nonumber\\
&&=-\sum_{k,l}\left[\mathbf{X}^{j_{0}}\right]^{T}\boldsymbol{\Psi}_{l}\mathbf{H}_{l,k}\boldsymbol{\Psi}_{k}^{\star}(\mathbf{Y}-\mathbf{X}(\boldsymbol{\beta }))+[\boldsymbol{\Psi}_{l}^{\star}(\mathbf{Y}-\mathbf{X}(\boldsymbol{\beta })]^{T}\mathbf{H}_{l,k}\boldsymbol{\Psi}_{k}^{\star}\mathbf{X}^{j_{0}}.\nonumber\\
\label{eqminimnorm}
\end{eqnarray} 
From (\ref{eqminimnorm}), the minimizer of  (\ref{lossfunction})
  with respect to $\boldsymbol{\beta},$ i.e., the generalized least-squared estimator $ \widehat{\boldsymbol{\beta}}_{N}$ of $\boldsymbol{\beta}$ is  given by the solution to the following matrix functional equation:
\begin{eqnarray}
&&-\mathbf{X}^{T}\mathbf{C}^{-1}(\mathbf{Y}-\mathbf{X}(\boldsymbol{\beta }))=\mathbf{X}^{T}\mathbf{C}^{-1}\mathbf{X}(\boldsymbol{\beta })-\mathbf{X}^{T}\mathbf{C}^{-1}\mathbf{Y}
=\mathbf{0}\nonumber\\
&&-(\mathbf{Y}-\mathbf{X}(\boldsymbol{\beta }))^{T}\mathbf{C}^{-1}\mathbf{X}=\boldsymbol{\beta }^{T}\mathbf{X}^{T}\mathbf{C}^{-1}\mathbf{X}-\mathbf{Y}^{T}\mathbf{C}^{-1}\mathbf{X}
=\mathbf{0}.
\label{glse2}
\end{eqnarray}

\noindent  Furthermore, under the condition that the inverse 
$(\mathbf{X}^{T}\mathbf{C}^{-1}\mathbf{X})^{-1}$ exists, the solution to (\ref{glse2}) is defined as
\begin{eqnarray}\widehat{\boldsymbol{\beta}}_{N}&=&\left(\mathbf{X}^{T}\mathbf{C}^{-1}\mathbf{X}\right)^{-1}\mathbf{X}^{T}
\mathbf{C}^{-1}(\mathbf{Y}_{N})\nonumber\\
&=&\boldsymbol{\beta}+\left(\mathbf{X}^{T}\mathbf{C}^{-1}\mathbf{X}\right)^{-1}\mathbf{X}^{T}
\mathbf{C}^{-1}(\boldsymbol{\varepsilon}_{N}).\label{glse}
\end{eqnarray}
Then, from  (\ref{glse}), 

\begin{eqnarray}
E[\widehat{\boldsymbol{\beta}}_{N}]&=& \boldsymbol{\beta},\quad 
E[(\widehat{\boldsymbol{\beta}}_{N}-\boldsymbol{\beta})(\widehat{\boldsymbol{\beta}}_{N}-\boldsymbol{\beta})^{T}]=
 \left(\mathbf{X}^{T}\mathbf{C}^{-1}\mathbf{X}\right)^{-1}\nonumber\\
 \widehat{\boldsymbol{\beta}}_{N}\in H^{p} &\Leftrightarrow &\boldsymbol{\varepsilon}^{T}\mathbf{C}^{-1}\mathbf{X} \left(\mathbf{X}^{T}\mathbf{C}^{-1}\mathbf{X}\right)^{-1}\left(\mathbf{X}^{T}
\mathbf{C}^{-1}\mathbf{X}\right)^{-1}\mathbf{X}^{T}\mathbf{C}^{-1}\boldsymbol{\varepsilon}<\infty,\quad \mbox{a.s.},\nonumber\\
\label{varest}
\end{eqnarray}
\noindent where a.s. denotes the almost surely equality, and the last condition in  (\ref{varest}) should be assumed for the suitable definition of the parameter estimator $\widehat{\boldsymbol{\beta}}_{N}.$
  
  \subsection{Asymptotic normality}
  From (\ref{varest}), applying Theorem 2.7 in Bosq (2000), the following central limit result provides the asymptotic normal distribution of the generalized least-squared estimator $\widehat{\boldsymbol{\beta}}_{N},$ as $N\to \infty.$
  \begin{theorem}
  \label{CLRGLSE}
 Under \textbf{Assumptions A1--A4}, let $\widehat{\boldsymbol{\beta}}_{N}$ be the generalized 
  least-squared estimator defined in (\ref{glse}) satisfying (\ref{varest}).
Assume that    $\{\delta_{n},\ n\in \mathbb{Z}\}$ is  Gaussian 
strong-white noise in $H.$
   Then, as $N\to \infty,$
  $$\frac{\left(\mathbf{X}^{T}\mathbf{C}^{-1}\mathbf{X}\right)^{1/2}\left(\widehat{\boldsymbol{\beta}}_{N}-\boldsymbol{\beta}\right)}{\sqrt{N}}\to_{D} \mathcal{N}\left(\mathbf{0},\mathbf{I}_{N\times N}
  \right),$$ \noindent where $\mathbf{I}_{N\times N}$ denotes the identity operator on $H^{N}.$
  
 \end{theorem}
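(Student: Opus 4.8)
The plan is to exploit that, under the Gaussian hypothesis on the strong-white-noise innovations $\{\delta_{n}\}$, the normalised estimation error is an explicit continuous linear functional of the $H^{N}$-valued Gaussian vector $\boldsymbol{\varepsilon}_{N}$; the computation of its covariance then yields the identity operator, and the infinite-dimensional central limit theorem (Theorem 2.7 in Bosq, 2000) delivers the stated weak convergence once the functional is cast as a normalised partial sum of the innovations.

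First I would use (\ref{glse}) to write
\[
\bigl(\mathbf{X}^{T}\mathbf{C}^{-1}\mathbf{X}\bigr)^{1/2}\bigl(\widehat{\boldsymbol{\beta}}_{N}-\boldsymbol{\beta}\bigr)=\mathbf{P}_{N}\,\boldsymbol{\varepsilon}_{N},\qquad \mathbf{P}_{N}:=\bigl(\mathbf{X}^{T}\mathbf{C}^{-1}\mathbf{X}\bigr)^{-1/2}\mathbf{X}^{T}\mathbf{C}^{-1}.
\]
Because $\boldsymbol{\varepsilon}_{N}$ is a linear image of $\delta_{1},\dots,\delta_{N}$ and $\varepsilon_{0}$ through the ARH(1) recursion (\ref{ARHerroerterm}), it is Gaussian in $H^{N}$, so $\mathbf{P}_{N}\boldsymbol{\varepsilon}_{N}$ is a centred Gaussian element. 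Using $E[\boldsymbol{\varepsilon}_{N}\otimes\boldsymbol{\varepsilon}_{N}]=\mathbf{C}$ from (\ref{cocopmat}), its covariance operator is
\[
\mathbf{P}_{N}\,\mathbf{C}\,\mathbf{P}_{N}^{\star}=\bigl(\mathbf{X}^{T}\mathbf{C}^{-1}\mathbf{X}\bigr)^{-1/2}\mathbf{X}^{T}\mathbf{C}^{-1}\mathbf{X}\bigl(\mathbf{X}^{T}\mathbf{C}^{-1}\mathbf{X}\bigr)^{-1/2}=\mathbf{I}_{N\times N},
\]
which identifies the limiting covariance as the identity on $H^{N}$ and agrees with the second-moment identity in (\ref{varest}).

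To place $\mathbf{P}_{N}\boldsymbol{\varepsilon}_{N}$ within the scope of Bosq's theorem, I would linearise it in the innovations. By the factorisation $\mathbf{C}^{-1}=\mathbf{R}_{0}^{-1}\boldsymbol{\rho}^{-1}$ of (\ref{invmatcovop}) and the tridiagonal entries (\ref{invmco}), the quasi-differencing action of $\boldsymbol{\rho}^{-1}$ reproduces, away from the two boundary rows, the uncorrelated increments $\delta_{n}=\varepsilon_{n}-\rho(\varepsilon_{n-1})$; equivalently, since $\rho$ is self-adjoint (\textbf{Assumption A2}), projection onto each eigendirection $\psi_{k}$ turns the $k$-th coordinate into a scalar stationary AR(1) process with coefficient $\lambda_{k}(\rho)$, whose bidiagonal whitening factor (\ref{eidbm}) maps $\langle\varepsilon_{n},\psi_{k}\rangle_{H}$ to the innovation coordinate $\langle\delta_{n},\psi_{k}\rangle_{H}$. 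Hence $\mathbf{X}^{T}\mathbf{C}^{-1}\boldsymbol{\varepsilon}_{N}$ takes the form $\sum_{n=1}^{N}W_{n}$, with $H^{p}$-valued summands $W_{n}$ that are linear functionals of the independent Gaussian $\delta_{n}$ and thus independent with finite second-order moments. Applying Theorem 2.7 in Bosq (2000) to the resulting normalised sum then gives the Gaussian limit.

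The main obstacle is the infinite-dimensional nature of the limit: since $\mathbf{I}_{N\times N}$ is not trace class on $H^{N}$, the limiting law cannot be a genuine Gaussian measure and must be read through its finite-dimensional projections onto $\{\psi_{k}\}_{k\geq 1}$, the convergence being proved coordinatewise and then assembled. Two points require care. First, the moment/Lindeberg conditions of Bosq's theorem must hold uniformly across the projection levels; here \textbf{Assumption A3}, $\{\psi_{k}\}_{k\geq 1}\subset R_{0}^{1/2}(H)$, makes the coefficients $a_{l,k},b_{l,k},c_{l,k}$ of (\ref{sefecinh}) well defined, so that the RKHS quadratic form (\ref{sdcinvcoo2}) controls the summands $W_{n}$. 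Second, one must check that the boundary contributions from the corner entries $\widetilde{\rho}_{1,1},\widetilde{\rho}_{N,N}$, which differ from the stationary interior entries, are asymptotically negligible; this is where \textbf{Assumption A1} (guaranteeing $R_{0}^{-1}$) and the contraction $\|\rho\|_{\mathcal{L}(H)}^{k}<1$ of the ARH(1) model enter.
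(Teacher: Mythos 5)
Your proposal follows essentially the same route as the paper's own (very terse) proof: both write the normalised error $\left(\mathbf{X}^{T}\mathbf{C}^{-1}\mathbf{X}\right)^{1/2}\left(\widehat{\boldsymbol{\beta}}_{N}-\boldsymbol{\beta}\right)$ as a linear image of the Gaussian error vector $\boldsymbol{\varepsilon}_{N}$ through $\left(\mathbf{X}^{T}\mathbf{C}^{-1}\mathbf{X}\right)^{-1/2}\mathbf{X}^{T}\mathbf{C}^{-1}$, identify the covariance as the identity via $\mathbf{C}=E[\boldsymbol{\varepsilon}_{N}\otimes\boldsymbol{\varepsilon}_{N}]$, and invoke Theorem 2.7 of Bosq (2000). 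In fact you supply more detail than the published argument (the whitening of the AR(1) coordinates into independent innovation summands, and the caveat that the identity is not trace class so the limit must be read through finite-dimensional projections), points the paper's two-line proof passes over in silence.
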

 \begin{proof}
  The proof directly follows from  
  Theorem 2.7 in Bosq (2000), since, from equation (\ref{varest}), the $H$-valued components of the functional vector
 \begin{equation*} 
  \boldsymbol{\mathcal{Z}}=\left(\begin{array}{c}\mathcal{Z}_{1}\\
  \vdots\\ \mathcal{Z}_{N}\\ \end{array}\right)=\left(\mathbf{X}^{T}\mathbf{C}^{-1}\mathbf{X}\right)^{1/2}\left(\widehat{\boldsymbol{\beta}}_{N}-\boldsymbol{\beta}\right)\label{e1th1}\end{equation*}
  \noindent  are independent and identically distributed $H$-valued random variables, with, for $i=1,\dots,p,$  
  \begin{equation} \mathcal{Z}_{i}=\sum_{j=1}^{N}B_{i,j}(\varepsilon_{j})\sim \mathcal{N}\left(\mathbf{0}, I\right),\label{e2th1}\end{equation}
  \noindent and, for $j=1,\dots,N,$ $B_{i,j}$ denotes the $(i,j)$ functional entry of \linebreak $\left(\mathbf{X}^{T}\mathbf{C}^{-1}\mathbf{X}\right)^{1/2}\left(\mathbf{X}^{T}\mathbf{C}^{-1}\mathbf{X}\right)^{-1}\mathbf{X}^{T}
\mathbf{C}^{-1}.$ As before,  $I$ denotes the identity operator on $H.$ The Central Limit Result provided in Theorem 2.7 in Bosq (2000) for i.i.d. $H$-valued random variables then lead to the desired result.

  \end{proof}
\subsection{Strong consistency}

The following  conditions are required:

\medskip

\noindent \textbf{Assumption A5.} There exists $Q\in \mathcal{L}(H^{p})$ such that 
\begin{eqnarray}&& \left\|\left(\frac{\mathbf{X}^{T}
\mathbf{C}^{-1}\mathbf{X}}{N}\right)^{-1}-\mathbf{Q}\right\|_{\mathcal{L}(H^{p})}\to 0,
\quad  N\to \infty,\label{detlimits0}
\end{eqnarray} where   $\mathcal{L}(H^{p})$ denotes the space of bounded linear operators on $H^{p}.$ 

\medskip

\noindent \textbf{Assumption A6}. For every $N\geq 2,$ $\mathbf{X}$ is such that   
$\mathbf{C}^{-1}\mathbf{X}\mathbf{X}^{T}\mathbf{C}^{-1}\in $\linebreak$\mathcal{L}(H^{N}),$ with  $\mathcal{L}(H^{N})$ denoting the space of bounded linear operators on $H^{N}.$
\begin{theorem}
\label{sc}
Under  \textbf{Assumptions A1--A6}, the generalized least-squared estimator $\widehat{\boldsymbol{\beta }}_{N}$ satisfying 
(\ref{glse})--(\ref{varest}) is strong consistent in $H^{p},$ i.e.,
\begin{equation}
\|\widehat{\boldsymbol{\beta }}_{N}-\boldsymbol{\beta }\|_{H^{p}}\to_{a.s.} 0,\quad N\to \infty.\label{escth}\end{equation}
\end{theorem}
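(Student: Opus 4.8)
The plan is to start from the closed form (\ref{glse}) of the estimator, which gives the centered error
\begin{equation*}
\widehat{\boldsymbol{\beta}}_{N}-\boldsymbol{\beta}=\left(\frac{\mathbf{X}^{T}\mathbf{C}^{-1}\mathbf{X}}{N}\right)^{-1}\frac{\mathbf{X}^{T}\mathbf{C}^{-1}(\boldsymbol{\varepsilon}_{N})}{N}.
\end{equation*}
Taking norms in $H^{p}$ and using submultiplicativity of the operator norm, it suffices to bound the first factor uniformly in $N$ and to prove that the normalized score $N^{-1}\mathbf{X}^{T}\mathbf{C}^{-1}(\boldsymbol{\varepsilon}_{N})$ converges to $\mathbf{0}$ almost surely in $H^{p}$. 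The first factor is handled immediately by \textbf{Assumption A5}: since $(\mathbf{X}^{T}\mathbf{C}^{-1}\mathbf{X}/N)^{-1}\to \mathbf{Q}$ in $\mathcal{L}(H^{p})$, its operator norm stays bounded for all $N$ large. Thus the whole problem reduces to the almost sure vanishing of the score, and combining a bounded operator sequence with an a.s.-null vector sequence then yields (\ref{escth}).

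First I would stress that a crude second-moment bound does \emph{not} suffice here: from (\ref{varest}) the covariance of the score is $N^{-2}\mathbf{X}^{T}\mathbf{C}^{-1}\mathbf{X}$, whose operator norm is $O(1/N)$ by \textbf{Assumption A5}, but whose trace need not be, because $\mathbf{Q}$ is only assumed bounded, not nuclear. Hence a genuine strong law is required rather than Chebyshev in the $H^{p}$-norm. The key structural observation is that left multiplication by $\mathbf{C}^{-1}$ \emph{whitens} the ARH(1) error: combining the explicit tridiagonal entries (\ref{repmatfv}) of $\mathbf{C}^{-1}$ with the state equation (\ref{ARHerroerterm}), the $i$-th block of $\mathbf{C}^{-1}(\boldsymbol{\varepsilon}_{N})$ collapses, for interior indices, to $R_{0}^{-1}(I-\rho^{2})^{-1}(\delta_{i}-\rho\delta_{i+1})$, with $\{\delta_{n}\}$ the i.i.d. strong-white-noise innovations. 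Substituting this into $\mathbf{X}^{T}\mathbf{C}^{-1}(\boldsymbol{\varepsilon}_{N})$ and reindexing (Abel summation) rewrites each component of the score, up to two boundary terms involving $\varepsilon_{1},\varepsilon_{2},\varepsilon_{N-1},\varepsilon_{N}$, as a sum $\sum_{i}W_{i}(\delta_{i})$ of independent, zero-mean $H$-valued summands, the weights $W_{i}$ being built from the regressors $X_{i-1},X_{i}$ and from $R_{0}^{-1}(I-\rho^{2})^{-1}\rho$.

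I would then treat the two pieces separately. The boundary terms, divided by $N$, vanish almost surely by Borel--Cantelli, since $E\|\varepsilon_{n}\|_{H}^{2}<\infty$ and stationarity force $\|\varepsilon_{n}\|_{H}/N\to 0$ a.s. For the main term $N^{-1}\sum_{i}W_{i}(\delta_{i})$ I would invoke a Kolmogorov-type strong law for independent $H$-valued random variables (the Hilbert-space strong laws used throughout Bosq, 2000): because $\{\delta_{n}\}$ is i.i.d. with trace autocovariance, $E\|W_{i}(\delta_{i})\|_{H}^{2}\le \|W_{i}\|_{\mathcal{L}(H)}^{2}\,E\|\delta_{0}\|_{H}^{2}$, so that if the operator norms $\|W_{i}\|_{\mathcal{L}(H)}$ are uniformly bounded, then $\sum_{i}i^{-2}E\|W_{i}(\delta_{i})\|_{H}^{2}<\infty$, which gives $N^{-1}\sum_{i}W_{i}(\delta_{i})\to \mathbf{0}$ a.s. Feeding this back into the factorization above completes the argument.

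The hard part will be precisely the uniform boundedness of the weights $W_{i}$. Whitening introduces the unbounded operator $R_{0}^{-1}$ (recall $1/\lambda_{k}(R_{0})\to\infty$), so the compositions $X_{i}^{\ast}R_{0}^{-1}(I-\rho^{2})^{-1}$ are only well defined, and only bounded on $H$, thanks to \textbf{Assumption A3} (which places the eigenvectors $\psi_{k}$ of $\rho$ in $R_{0}^{1/2}(H)$) and \textbf{Assumption A6} (boundedness of $\mathbf{C}^{-1}\mathbf{X}\mathbf{X}^{T}\mathbf{C}^{-1}$ on $H^{N}$, uniformly in $N$). Making this RKHS bookkeeping rigorous -- verifying that each operator surviving after whitening is genuinely bounded, with a bound uniform in both $i$ and $N$ -- is the delicate step, while \textbf{Assumptions A1--A2} and the series representations of the preceding lemmas are what render these compositions meaningful in the first place.
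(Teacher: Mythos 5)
Your opening factorization and the use of \textbf{Assumption A5} to control $\bigl(\mathbf{X}^{T}\mathbf{C}^{-1}\mathbf{X}/N\bigr)^{-1}$ coincide with the paper, but from that point on you take a genuinely different and considerably harder road than the one actually used. The paper does \emph{not} whiten the error: it bounds the score pathwise by Cauchy--Schwarz,
\begin{equation*}
\left\|\frac{\mathbf{X}^{T}\mathbf{C}^{-1}(\boldsymbol{\varepsilon})}{N}\right\|_{H^{p}}^{2}
=\frac{1}{N^{2}}\left\langle \mathbf{C}^{-1}\mathbf{X}\mathbf{X}^{T}\mathbf{C}^{-1}(\boldsymbol{\varepsilon}),\boldsymbol{\varepsilon}\right\rangle_{H^{N}}
\leq \frac{1}{N^{2}}\left\|\mathbf{C}^{-1}\mathbf{X}\mathbf{X}^{T}\mathbf{C}^{-1}\right\|_{\mathcal{L}(H^{N})}\left\|\boldsymbol{\varepsilon}\right\|_{H^{N}}^{2},
\end{equation*}
invokes \textbf{Assumption A6} for the operator norm, and kills the remaining factor via $E\|\boldsymbol{\varepsilon}\|_{H^{N}}^{2}=N\|R_{0}\|_{\mathcal{N}(H)}$, so that $\|\boldsymbol{\varepsilon}\|_{H^{N}}^{2}/N^{2}\to_{a.s.}0$ (a strong law for the stationary scalar sequence $\|\varepsilon_{j}\|_{H}^{2}$). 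Your objection that ``a crude second-moment bound does not suffice'' therefore misses the target: the paper's bound is a pathwise operator-norm bound, not a Chebyshev bound on the score's covariance, so the non-nuclearity of $\mathbf{Q}$ is irrelevant to it. (To be fair, the paper's own step tacitly reads \textbf{A6} as giving a bound on $\|\mathbf{C}^{-1}\mathbf{X}\mathbf{X}^{T}\mathbf{C}^{-1}\|_{\mathcal{L}(H^{N})}$ that is $o(N)$ uniformly in $N$, which the literal statement of \textbf{A6} does not quite say; but that is the reading intended.)

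As for your own route, the whitening identity is correct -- for interior indices the $i$-th block of $\mathbf{C}^{-1}(\boldsymbol{\varepsilon})$ does collapse to $R_{0}^{-1}(I-\rho^{2})^{-1}(\delta_{i}-\rho\,\delta_{i+1})$ -- and the boundary terms and the Kolmogorov-type Hilbertian strong law are handled plausibly. But the proof as written has a genuine gap exactly where you locate it: the uniform-in-$i$ boundedness of the weights $W_{i}$, which involve the unbounded operator $R_{0}^{-1}$ composed with the regressors, is asserted to follow from \textbf{A3} and \textbf{A6} but never established. \textbf{A6} controls the aggregate operator $\mathbf{C}^{-1}\mathbf{X}\mathbf{X}^{T}\mathbf{C}^{-1}$ on $H^{N}$ for each $N$; it does not obviously control the individual compositions $X_{i}^{\star}R_{0}^{-1}(I-\rho^{2})^{-1}$ uniformly in $i$ and $N$, and \textbf{A3} only places the $\psi_{k}$ in $R_{0}^{1/2}(H)$, which is not enough by itself to make those compositions uniformly bounded. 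Until that step is closed, the Kolmogorov series condition $\sum_{i}i^{-2}E\|W_{i}(\delta_{i})\|_{H}^{2}<\infty$ is unsupported. The paper's Cauchy--Schwarz argument buys you exactly the escape from this difficulty: by never separating $\mathbf{C}^{-1}\mathbf{X}$ from its adjoint, it only ever needs the aggregate object that \textbf{A6} postulates.
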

From (\ref{glse}) and (\ref{detlimits0}),   
 as $N\to \infty$:
\begin{eqnarray}
&&\left\|\widehat{\boldsymbol{\beta}}_{N}-\boldsymbol{\beta}\right\|_{H^{p}}^{2}\leq 
\left\|\left(\frac{\mathbf{X}^{T}\mathbf{C}^{-1}\mathbf{X}}{N}\right)^{-1}\right\|_{\mathcal{L}(H^{p})}^{2}\left\|\frac{\mathbf{X}^{T}\mathbf{C}^{-1}(\boldsymbol{\varepsilon})}{N}\right\|_{H^{p}}^{2},\quad \mbox{a.s.}
\label{lim2hh}
\end{eqnarray}

Furthermore, applying Cauchy–-Schwarz inequality, the following a.s. identities hold:
\begin{eqnarray}
\left\|\frac{\mathbf{X}^{T}\mathbf{C}^{-1}(\boldsymbol{\varepsilon})}{N}\right\|_{H^{p}}^{2}&=&\left\langle \frac{\mathbf{X}^{T}\mathbf{C}^{-1}(\boldsymbol{\varepsilon})}{N},\frac{\mathbf{X}^{T}\mathbf{C}^{-1}(\boldsymbol{\varepsilon})}{N}\right\rangle_{H^{p}}\nonumber\\
&=& \frac{1}{N^{2}}\left\langle \mathbf{C}^{-1}\mathbf{X}\mathbf{X}^{T}\mathbf{C}^{-1}(\boldsymbol{\varepsilon}),
\boldsymbol{\varepsilon}\right\rangle_{H^{N}}
\nonumber\\
&\leq&
\frac{1}{N^{2}}\left\|\mathbf{C}^{-1}\mathbf{X}\mathbf{X}^{T}\mathbf{C}^{-1}(\boldsymbol{\varepsilon})\right\|_{H^{N}}\left\|\boldsymbol{\varepsilon}\right\|_{H^{N}}\nonumber\\
&\leq &\frac{1}{N^{2}}\left\|\mathbf{C}^{-1}\mathbf{X}\mathbf{X}^{T}\mathbf{C}^{-1}
\right\|_{\mathcal{L}(H^{N})}\left\|\boldsymbol{\varepsilon}\right\|_{H^{N}}^{2}.
\label{lim2b}
\end{eqnarray}
Now, consider
\begin{equation}
E\left\|
\boldsymbol{\varepsilon}\right\|_{H^{N}}^{2}=\sum_{j=1}^{N}E\|\varepsilon_{j}\|_{H}^{2}=N\|R_{0}\|_{\mathcal{N}(H)},
\label{eqdemosc}
\end{equation}
\noindent where $\|\cdot\|_{\mathcal{N}(H)}$ denotes the nuclear or trace operator norm.
From (\ref{eqdemosc}),
\begin{equation}
\frac{\left\|
\boldsymbol{\varepsilon}\right\|_{H^{N}}^{2}}{N^{2}}\to_{a.s. } 0,\quad N\to \infty.
\label{opnormtrace}
\end{equation}

From equations (\ref{lim2b}) and (\ref{opnormtrace}), under \textbf{Assumption A6},
\begin{equation}
\left\|\frac{\mathbf{X}^{T}\mathbf{C}^{-1}(\boldsymbol{\varepsilon})}{N}\right\|_{H^{p}}^{2}\to_{a.s.} 0,\quad N\to \infty .
\label{limitas}
\end{equation}

Under \textbf{Assumption A5}, from equations (\ref{lim2hh}) and (\ref{limitas}), the strong-consistency in $H^{p}$ of 
 $\widehat{\boldsymbol{\beta}}_{N}$ holds.
\section{Practical implementation}

\label{esresiduals}

In practice, when $R_{0}$ and $R_{1}$ are unknown,   ordinary 
least-squared is first applied,  that is, 
$\widetilde{\boldsymbol{\beta }}_{N}=\left(\mathbf{X}^{T}
\mathbf{X}\right)^{-1}\mathbf{X}^{T}(\mathbf{Y}),$
 is  computed, and 
 the resulting residuals are used to approximate $R_{0}$ and $R_{1}$   as follows:
 \begin{eqnarray}
\widetilde{R}_{0}^{N}&:=&\frac{1}{N}\sum_{m=1}^{N}[\mathbf{Y}_{m}-
\mathbf{X}_{m}^{T}(\widetilde{\boldsymbol{\beta }}_{N})]\otimes [\mathbf{Y}_{m}-
\mathbf{X}_{m}^{T}(\widetilde{\boldsymbol{\beta }}_{N})]\nonumber\\
\widetilde{R}_{1}^{N-1}&:=&\frac{1}{N-1}\sum_{m=1}^{N-1}[\mathbf{Y}_{m}-
\mathbf{X}_{m}^{T}(\widetilde{\boldsymbol{\beta }}_{N})]\otimes [\mathbf{Y}_{m+1}-
\mathbf{X}_{m+1}^{T}(\widetilde{\boldsymbol{\beta }}_{N})].
\label{estste1}
\end{eqnarray}
In a second step, these empirical covariance operators  are considered in the computation of equation  (\ref{glse}),  in terms of a suitable orthonormal  empirical basis 
 of $H.$ Consider, in particular, 
$\{\phi_{jN}\}_{j\geq 1}$  the system of   eigenvectors of the empirical autocovariance operator $\widetilde{R}_{0}^{N},$ satisfying (see Bosq, 2000, pp. 102--103)
\begin{eqnarray}
\widetilde{R}_{0}^{N}\phi_{jN}&=& \lambda_{jN}\phi_{jN},\ j\geq 1,
\nonumber\\
\lambda_{1N}&\geq &\dots\geq \lambda_{NN}\geq 0=\lambda_{N,N+1}=\lambda_{N,N+2},\dots,  \label{ev1}
\end{eqnarray}

\noindent where  $\{\lambda_{jN}\}_{j\geq 1}$  is the system of  eigenvalues  of  $\widetilde{R}_{0}^{N}.$ The operators $[\widetilde{R}_{0}^{N}]^{-1}$ and $\widehat{\widetilde{\rho}}_{N}=\widetilde{R}_{1}^{N-1}[\widetilde{R}_{0}^{N}]^{-1}$ can then be computed in terms of such empirical eigenvectors. Thus,  
 the  $H$-valued residuals 
\begin{equation}
\widetilde{\varepsilon}_{n}:= Y_{n}-\widetilde{Y}_{n}=Y_{n}-X_{n}^{1}(\widetilde{\boldsymbol{\beta }}_{1}^{N}) -\dots - X_{n}^{p}(\widetilde{\boldsymbol{\beta }}_{p}^{N}),\quad n=1,\dots,N,
\label{fs}
\end{equation}

\noindent of the ordinary least-squared estimator  $\widetilde{\boldsymbol{\beta }}_{N}=(
\widetilde{\boldsymbol{\beta }}_{1}^{N},\dots,\widetilde{\boldsymbol{\beta }}^{N}_{p})^{T}$ 
 are considered, in the computation of the following estimator of the autocorrelation operator of the error process: 
   \begin{eqnarray}
 &&\hspace*{-0.3cm} \widehat{\widetilde{\rho}}_{k_{N}}:=\sum_{i=1}^{k_{N}}\sum_{j=1}^{k_{N}}\widehat{\widetilde{\rho}}_{i,j,N}\phi_{iN}\otimes
\phi_{jN};\ 
  \widehat{\widetilde{\rho}}_{i,j,N}=\frac{1}{N-1}\sum_{
n=1}^{N-1}\left\langle \widetilde{\varepsilon}_{n},\phi_{iN}\right\rangle_{H}
\frac{\left\langle \widetilde{\varepsilon}_{n+1},\phi_{jN}\right\rangle_{H}}{\lambda_{jN}}.\nonumber\\\label{estcomponent}
\end{eqnarray}
\noindent   Here, $k_{N}$ denotes the truncation parameter, with  $k_{N}\leq N,$ $k_{N}\to \infty,$  and $\frac{k_{N}}{N}\to 0,$ $N\to \infty $ 
(see  Bosq, 2000).
 The estimator (\ref{estcomponent})
 has 
the same asymptotic properties as the estimator  of $\rho,$ computed from 
  $\{\varepsilon_{n},\ n=1,\dots,N\},$ in the case where  the ordinary least-squared estimator $\widetilde{\boldsymbol{\beta }}_{N}$ of $\boldsymbol{\beta}$
is strong consistent in $H^{p}.$ 
In particular, $\widehat{\widetilde{\rho}}_{k_{N}}$ is also 
strong consistent, in the norm of $\mathcal{L}(H)$ (see  Chapter 8 in Bosq, 2000). Note that   
\begin{eqnarray}
 \widetilde{\varepsilon}_{n}&=& Y_{n}-
X_{n}^{1}(\widetilde{\boldsymbol{\beta }}_{1}^{N}) -\dots - X_{n}^{p}(\widetilde{\boldsymbol{\beta }}_{p}^{N})\nonumber\\
&=& \varepsilon_{n}+X_{n}^{1}\left(\boldsymbol{
\beta}_{1}-\widetilde{\boldsymbol{\beta }}_{1}^{N}\right)+\dots +
X_{n}^{p}\left(\boldsymbol{
\beta}_{p}-\widetilde{\boldsymbol{\beta }}_{p}^{N}\right)\nonumber\\
&=&\varepsilon_{n}+o_{a.s.}(1),\quad N\to \infty,\nonumber
\label{consrho}
\end{eqnarray}
\noindent in view of the strong consistency of $\widetilde{\boldsymbol{\beta }}_{N},$ leading to  
\begin{eqnarray}\widetilde{R}_{0}^{N}&=&
\frac{1}{N}\sum_{n=1}^{N}\widetilde{\varepsilon}_{n}\otimes 
\widetilde{\varepsilon}_{n}=\frac{1}{N}\sum_{n=1}^{N}\varepsilon_{n}\otimes 
\varepsilon_{n}+o_{a.s.}(1)=R_{0}^{N}+o_{a.s.}(1)\nonumber\\
\widetilde{R}_{1}^{N-1}&=&\frac{1}{N-1}\sum_{n=1}^{N-1}\widetilde{\varepsilon}_{n}\otimes 
\widetilde{\varepsilon}_{n+1}=\frac{1}{N-1}\sum_{n=1}^{N-1}\varepsilon_{n}\otimes 
\varepsilon_{n+1}+o_{a.s.}(1)
\nonumber\\
&=&R_{1}^{N-1}+o_{a.s.}(1),\label{atucovvarep}
\end{eqnarray}
\noindent which also implies the 
strong consistency of $\widetilde{R}_{0}^{N},$ and  $\widetilde{R}_{1}^{N-1},$ involved in the computation of (\ref{glse}), when $R_{0}$ and $R_{1}$ are unknown. 
For the strong consistency of the ordinary least-squared parameter estimator $\widetilde{\boldsymbol{\beta }}_{N},$ under dependent errors, the following sufficient conditions are assumed:

\medskip

\noindent \textbf{Assumption $\widetilde{A5}$}. There exists $\widetilde{Q}\in \mathcal{L}(H^{p})$ such that 
\begin{eqnarray}&&\left\| \left(\frac{\mathbf{X}^{T}
\mathbf{X}}{N}\right)^{-1}-\widetilde{Q}\right\|_{\mathcal{L}(H^{p})}\to 0,\quad N\to \infty.
 \label{detlimits0bb}
\end{eqnarray} 

\medskip

\noindent \textbf{Assumption $\widetilde{A6}$}. $\mathbf{X}$ is such that  $\mathbf{X}\mathbf{X}^{T}\in \mathcal{L}(H^{N}),$ for every $N\geq 2.$

\begin{proposition}
\label{prole}
Under \textbf{Assumptions $\widetilde{A5}$--$\widetilde{A6}$}, the ordinary least-squared parameter estimator $\widetilde{\boldsymbol{\beta }}_{N}$ is strong consistent.
\end{proposition}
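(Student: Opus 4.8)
The plan is to mirror the argument that establishes Theorem~\ref{sc}, replacing the weighting operator $\mathbf{C}^{-1}$ by the identity throughout and invoking \textbf{Assumptions $\widetilde{A5}$--$\widetilde{A6}$} in place of \textbf{Assumptions A5--A6}. First I would substitute model (\ref{modelreg}) into the ordinary least-squared formula $\widetilde{\boldsymbol{\beta}}_{N}=(\mathbf{X}^{T}\mathbf{X})^{-1}\mathbf{X}^{T}(\mathbf{Y})$ to obtain the OLS counterpart of (\ref{glse}),
\begin{equation*}
\widetilde{\boldsymbol{\beta}}_{N}-\boldsymbol{\beta}=\left(\frac{\mathbf{X}^{T}\mathbf{X}}{N}\right)^{-1}\frac{\mathbf{X}^{T}(\boldsymbol{\varepsilon})}{N},
\end{equation*}
and then take $H^{p}$-norms and factor out the operator, exactly as in (\ref{lim2hh}), to reach the almost-sure bound
\begin{equation*}
\left\|\widetilde{\boldsymbol{\beta}}_{N}-\boldsymbol{\beta}\right\|_{H^{p}}^{2}\leq\left\|\left(\frac{\mathbf{X}^{T}\mathbf{X}}{N}\right)^{-1}\right\|_{\mathcal{L}(H^{p})}^{2}\left\|\frac{\mathbf{X}^{T}(\boldsymbol{\varepsilon})}{N}\right\|_{H^{p}}^{2}.
\end{equation*}

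Next I would control the second factor by the same Cauchy--Schwarz computation as in (\ref{lim2b}), now without the $\mathbf{C}^{-1}$ weights:
\begin{equation*}
\left\|\frac{\mathbf{X}^{T}(\boldsymbol{\varepsilon})}{N}\right\|_{H^{p}}^{2}=\frac{1}{N^{2}}\left\langle\mathbf{X}\mathbf{X}^{T}(\boldsymbol{\varepsilon}),\boldsymbol{\varepsilon}\right\rangle_{H^{N}}\leq\frac{1}{N^{2}}\left\|\mathbf{X}\mathbf{X}^{T}\right\|_{\mathcal{L}(H^{N})}\left\|\boldsymbol{\varepsilon}\right\|_{H^{N}}^{2}.
\end{equation*}
Here \textbf{Assumption $\widetilde{A6}$} guarantees that $\mathbf{X}\mathbf{X}^{T}\in\mathcal{L}(H^{N})$, so its operator norm is finite for every $N\geq2$. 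The remaining ingredient is the almost-sure decay $\|\boldsymbol{\varepsilon}\|_{H^{N}}^{2}/N^{2}\to0$, which I would obtain exactly as in (\ref{eqdemosc})--(\ref{opnormtrace}): from $E\|\boldsymbol{\varepsilon}\|_{H^{N}}^{2}=N\|R_{0}\|_{\mathcal{N}(H)}$ it suffices to observe that $\frac{1}{N}\sum_{j=1}^{N}\|\varepsilon_{j}\|_{H}^{2}$ converges almost surely to $\|R_{0}\|_{\mathcal{N}(H)}$ and then to divide by the extra factor $N$.

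Combining the two displays above gives $\|\mathbf{X}^{T}(\boldsymbol{\varepsilon})/N\|_{H^{p}}^{2}\to_{a.s.}0$; invoking \textbf{Assumption $\widetilde{A5}$}, which bounds $(\mathbf{X}^{T}\mathbf{X}/N)^{-1}$ uniformly in the $\mathcal{L}(H^{p})$-norm through (\ref{detlimits0bb}), the product in the first display tends to zero almost surely, yielding the claimed strong consistency $\|\widetilde{\boldsymbol{\beta}}_{N}-\boldsymbol{\beta}\|_{H^{p}}\to_{a.s.}0$.

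The step I expect to require the most care is the almost-sure convergence $\frac{1}{N}\sum_{j=1}^{N}\|\varepsilon_{j}\|_{H}^{2}\to\|R_{0}\|_{\mathcal{N}(H)}$ underlying (\ref{opnormtrace}), since the summands are temporally dependent under the ARH(1) structure (\ref{ARHerroerterm}) and an i.i.d.\ strong law does not apply directly. The resolution is that an ARH(1) process driven by strong $H$-white noise is stationary and ergodic, so the ergodic theorem (equivalently, the Hilbert-space strong law for ARH(1) sequences in Bosq, 2000) applies to the stationary scalar sequence $\{\|\varepsilon_{j}\|_{H}^{2}\}$. The strong consistency so obtained is precisely the hypothesis already used, in (\ref{atucovvarep}), to guarantee consistency of the empirical operators $\widetilde{R}_{0}^{N}$ and $\widetilde{R}_{1}^{N-1}$, so this proposition closes the plug-in construction of Section~\ref{esresiduals}.
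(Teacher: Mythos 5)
Your proposal is correct and follows essentially the same route as the paper: the paper's own proof consists precisely of the almost-sure bound $\|\widetilde{\boldsymbol{\beta}}_{N}-\boldsymbol{\beta}\|_{H^{p}}^{2}\leq\|(\mathbf{X}^{T}\mathbf{X}/N)^{-1}\|_{\mathcal{L}(H^{p})}^{2}\,\|\mathbf{X}^{T}(\boldsymbol{\varepsilon})/N\|_{H^{p}}^{2}$ together with the remark that the rest proceeds as in Theorem~\ref{sc} with $\mathbf{C}^{-1}$ replaced by the identity and \textbf{Assumptions $\widetilde{A5}$--$\widetilde{A6}$} in place of \textbf{A5--A6}. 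Your additional observation that the almost-sure convergence of $\frac{1}{N}\sum_{j=1}^{N}\|\varepsilon_{j}\|_{H}^{2}$ needs an ergodic-theorem argument for the stationary ARH(1) sequence, rather than an i.i.d.\ strong law, is a point the paper leaves implicit and is handled correctly.
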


Under \textbf{Assumptions $\widetilde{A5}$--$\widetilde{A6}$},   the proof  of Proposition \ref{prole} is derived, in a similar way to Theorem \ref{sc}, from the following a.s. inequality: 
\begin{eqnarray}
&&\|\widetilde{\boldsymbol{\beta}}_{N}-\boldsymbol{\beta}\|_{H^{p}}^{2}\leq 
\left\|\left(\frac{\mathbf{X}^{T}\mathbf{X}}{N}\right)^{-1}
\right\|_{\mathcal{L}(H^{p})}^{2}\left\|\frac{\mathbf{X}^{T}(\boldsymbol{\varepsilon})}{N}\right\|_{H^{p}}^{2}
\label{lim2}
\end{eqnarray}

\begin{remark}
\label{rem4}
When $R_{0}$ and $R_{1}$ are unknown,  the functional entries  $\widetilde{C}_{ij},$ $i,j=1,\dots,N,$ of $\mathbf{C}^{-1}=\left(\widetilde{C}_{ij}\right)_{i,j=1\dots,N}$ in (\ref{repmatfv}) can be replaced by $\widetilde{R}_{0}^{N}$ and 
$\widehat{\widetilde{\rho}}_{k_{N}}=F(\widetilde{R}_{1}^{N-1},\widetilde{R}_{0}^{N})=\pi_{k_{N}}^{\star}\widetilde{R}_{1}^{N-1}[\widetilde{R}_{0}^{N}]^{-1}\pi_{k_{N}}$ (see equations (\ref{estste1})--(\ref{estcomponent})). Here, $\pi_{k_{N}}$ denotes the orthogonal projector into the subspace of $H$ generated by the  eigenvectors 
$\{\phi_{jN},\ j=1,\dots,k_{N}\}$ of $\widetilde{R}_{0}^{N},$ with, as before,  $k_{N}\leq N,$ \linebreak $k_{N}\to \infty,$  and $\frac{k_{N}}{N}\to 0,$ $N\to \infty .$ \textbf{Assumptions $\widetilde{A5}$--$\widetilde{A6}$} ensure the strong consistency of the ordinary least squared estimator $\widetilde{\boldsymbol{\beta}}_{N}$ of $\boldsymbol{\beta }.$ From equation (\ref{atucovvarep}),  the  eigenvectors 
$\{\phi_{jN},\ j=1,\dots,k_{N}\}$ of $\widetilde{R}_{0}^{N}$ a.s.   converge  to the eigenvectors of $\widehat{R}_{0}^{N}=\frac{1}{N}\sum_{n=1}^{N}\varepsilon_{n}\otimes 
\varepsilon_{n},$  as $N\to \infty,$ since $\widetilde{R}_{0}^{N}\to_{a.s.} \widehat{R}_{0}^{N},$ $N\to \infty.$ (Also $\widetilde{R}_{1}^{N-1}\to_{a.s.} \widehat{R}_{1}^{N-1},$ $N\to \infty $).
 Under the conditions of  Theorem 8.8  in Bosq (2000) (see Section 8.3 in Bosq, 2000),   the  strong consistency of $\widehat{\widetilde{\rho}}_{k_{N}}$ then  holds,   when $\rho$ is a Hilbert--Schmidt operator, considering $k_{N}$ such that      \begin{equation}\frac{N \lambda_{k_{N}}^{2}(R_{0})}{\left(\sum_{j=1}^{k_{N}}a_{j}\right)^{2}\log(N)}\to \infty,\quad \quad N\to \infty,\label{eqtrucpar}
\end{equation}
\noindent where \begin{eqnarray}a_{1}&=& 2\sqrt{2}(\lambda_{1}(R_{0})-\lambda_{2}(R_{0}))^{-1}\nonumber\\ a_{j}&=& 2\sqrt{2}\max\left[(\lambda_{j-1}(R_{0})-\lambda_{j}(R_{0}))^{-1},(\lambda_{j}(R_{0})-\lambda_{j+1}(R_{0}))^{-1}\right], \ j\geq 2.\nonumber
\end{eqnarray}

\noindent Thus, the strong consistency of the corresponding plug-in generalized least-squared estimator, $\widehat{\widetilde{\boldsymbol{\beta}}}_{N},$   holds from 
the strong consistency of $\widetilde{\boldsymbol{\beta}}_{N},$ under the conditions of  Theorem 8.8  in Bosq (2000).

 \end{remark}
\subsubsection*{ARH(1)-based  estimation  of the functional response}
 
The following estimator of the $H$-valued dynamical response is considered: 
\begin{equation}
\widehat{Y}_{N}:=X_{N}^{1}(\widehat{\boldsymbol{\beta }}_{1}^{N}) +\dots + X_{N}^{p}(\widehat{\boldsymbol{\beta }}_{p}^{N})+\widehat{\widetilde{\rho}}_{k_{N}}(\widehat{\varepsilon}_{N-1}),
\label{eqfer}
\end{equation}
 \noindent where  $\widehat{\widetilde{\rho}}_{k_{N}}(\widehat{\varepsilon}_{N-1})$    is  computed in a similar way to  (\ref{estcomponent}), from the residuals $\widehat{\varepsilon}_{n}=Y_{n}-X_{n}^{1}(\widehat{\boldsymbol{\beta }}_{1}^{N})-\dots - X_{n}^{p}(\widehat{\boldsymbol{\beta }}_{p}^{N}),$ $n=1,\dots,N,$ with $\widehat{\boldsymbol{\beta }}_{i}^{N},$ $i=1,\dots,p,$ being the generalized least-squared estimators of the components of $\boldsymbol{\beta},$ based on the observation of $\mathbf{Y}_{1},\dots, \mathbf{Y}_{N},$ computed in terms of $\mathbf{C}^{-1},$ or its empirical version, as given before, in the case where $R_{0}$ and $R_{1}$ are unknown.
 
\section{Simulation Study}
\label{Secsims}
 The performance of the presented approach is studied  in the case where the eigenvectors of the autocovariance operator of the error process  are unknown, as usually it occurs in practice. Model 2 below (see also Models 3 and 4 in Supplementary Material I), also illustrates the fact that the Hilbert--Schmidt assumption on the regressors can be relaxed to the compactness condition, under diagonal spectral design.  
Let us restrict our attention to the Gaussian case, and to the real separable Hilbert space $H=L^{2}((a,b)),$   the space of square integrable functions on $(a,b),$ with $(a,b)=(0,60).$  The following systems of eigenvectors and eigenvalues are considered:
 \begin{eqnarray}
&& \phi_{j} \left( x\right) = \frac{2}{b-a} \sin \left(
\frac{\pi j x}{b-a} \right),\quad j\geq 1 \label{eigvex}\\
R_{0}(f)(x)&=&\sum_{k=1}^{\infty}\lambda_{k}(R_{0})\int_{a}^{b}\phi_{k}(x)\phi_{k}(y)f(y)dy\label{acoresp}\\
R_{\delta}(f)(x)&=&\sum_{k=1}^{\infty}\lambda_{k}(R_{\delta})\int_{a}^{b}\phi_{k}(x)\phi_{k}(y)f(y)dy
\label{acoresp2}\\
\rho(f)(x)&=&\sum_{k=1}^{\infty}\lambda_{k}(\rho)\int_{a}^{b}\phi_{k}(x)\phi_{k}(y)f(y)dy,
\label{autocorr}\\
X_{n}^{i}(\beta_{i})(x)&=&\sum_{k=1}^{\infty}x_{k}^{i}(n)\int_{a}^{b}\phi_{k}(x)\phi_{k}(y)\beta_{i}(y)dy,\quad i=1,\dots,p
\nonumber\\
\beta_{i}(x)&=&\sum_{k=1}^{\infty}\left\langle \beta_{i},\phi_{k}\right\rangle_{L^{2}((a,b))}\phi_{k}(x)=
\sum_{k=1}^{\infty}\beta_{i}(k)\phi_{k}(x),\ i=1,\dots,p.
\label{parameterest}
\end{eqnarray}
\noindent Equation (\ref{eigvex}) defines   $\{\phi_{j}\}_{j\geq 1}$ as  the eigenvectors    of the Dirichlet negative Laplacian operator on $(a,b).$  The sequences  $\{\lambda_{k}(R_{0})\}_{k\geq 1},$ $\{\lambda_{k}(R_{\delta})\}_{k\geq 1}$  and $\{\lambda_{k}(\rho)\}_{k\geq 1}$ are the  respective systems of eigenvalues of $R_{0},$  $R_{\delta}$ and $\rho.$ 
 Note that, in the examples below, $\{\psi_{k}\}_{k\geq 1}$ coincide with  the eigenvectors $\{\phi_{k}\}_{k\geq 1}$ of $R_{0}.$ Six models have been  analyzed, displaying different regularity orders. 
The observations $Y_{1},\dots, Y_{N}$ of the response  are generated from equations 
(\ref{modelreg})--(\ref{ARHerroerterm}), in terms of 
(\ref{eigvex})--(\ref{parameterest})
 (one realization of a functional sample of size $N=200$ of the response and its estimation is represented  in Supplementary Material I,  for the six Models analyzed).
 The results for the most regular and singular scenarios are displayed here, corresponding to Model 1 and 2, respectively (results on Models 3--6 are displayed, for $k_{N}=2,3,4,$ and $N=200, 600, 1000,$  in  Supplementary Material I). Tables \ref{T3a} 
and \ref{T6qq} show  the functional  empirical mean quadratic  errors 
\begin{eqnarray}
EFMQE(n)&=&\frac{1}{r}\sum_{i=1}^{r}\frac{1}{60}\sum_{x\in (0,60)}[Y^{i}_{n}(x)-\widehat{Y}^{i}_{n}(x)]^{2}, \label{eqe0}\end{eqnarray}
\noindent for the most unfavorable case, i.e., for  the largest truncation parameter value $k_{N}=4,$ and the smallest sample size $N=200.$ Here, $r$ denotes the number of repetitions generated. See also Supplementary Material I, where additional truncation parameter values and sample sizes are showed. The  CEMQEs, 
$$CEMQE(x,n)=\frac{1}{r}\sum_{i=1}^{r}[Y^{i}_{n}(x)-\widehat{Y}^{i}_{n}(x)]^{2},\ x\in (0,60),\ n=1,\dots,N,$$ \noindent are also represented, in that supplementary material. Here, $Y^{i}_{n}(x)$ denotes the value of the response at point $x\in (0,60),$  and
$\widehat{Y}^{i}_{n}(x)$ is its estimated value, for times $n=1,\dots, N=200,$ computed  from the $i$-th generation of a  functional sample of size $N,$ for $i=1,\dots,r.$ As given in Remark \ref{rem4}, the optimal $k_{N}$  is determined from the sample size, the convergence rate to zero of the empirical eigenvalues of $R_{0},$ and the distance between the empirical eigenvalues of $R_{0}.$  Indeed,   the optimal $k_{N}$ value lies in the interval $[2,4],$ for $N=200, 600, 1000$
(see  Supplementary Material I). 

Models 1 and 2 are defined from the following  parameter values: For each $k\geq 1,$ and $n\geq 1,$
\begin{eqnarray}\mbox{\textbf{Model 1}}\quad \lambda_{k}(R_{0}) &=& \frac{1}{(k+1)^{3}},\
\lambda_{k}(R_{\delta})=\frac{1}{(k+1)^{4}},\
\lambda_{k}(\rho)=\frac{1}{(k+1)}\nonumber\\
x_{k}^{1}(n)&=&\exp(-nk^{1/10}),\quad
x_{k}^{2}(n)=\exp(-nk^{15/100}),\nonumber\\
  x_{k}^{3}(n)&=&\exp(-nk^{2/10}),\quad \left\langle \beta_{1},\phi_{k}\right\rangle_{L^{2}(a,b)} =\frac{1}{(k+1)^{3/5}},\nonumber\\
\left\langle \beta_{2},\phi_{k}\right\rangle_{L^{2}(a,b)}
&=&\frac{1}{(k+1)^{7/10}},\quad\left\langle \beta_{3},\phi_{k}\right\rangle_{L^{2}(a,b)}=\frac{1}{(k+1)^{4/5}}.
\label{example3}\end{eqnarray}
\begin{eqnarray}
\mbox{\textbf{Model 2}}\quad \lambda_{k}(R_{0}) &=& \frac{1}{(k+1)^{11/10}},\
\lambda_{k}(R_{\delta})=\frac{1}{(k+1)^{12/10}}\nonumber\\ 
\lambda_{k}(\rho)&=&\frac{1}{(k+1)^{51/100}}\nonumber\\
x_{k}^{1}(n)&=&\frac{1}{n(k+1)^{1/10}},\quad
x_{k}^{2}(n)=\frac{1}{n(k+1)^{2/100}}\nonumber\\
  x_{k}^{3}(n)&=&\hspace*{-0.35cm}\frac{1}{n(k+1)^{3/100}},\quad \left\langle \beta_{1},\phi_{k}\right\rangle_{L^{2}(a,b)} =\frac{1}{(k+1)^{3/5}}\nonumber\\
\left\langle \beta_{2},\phi_{k}\right\rangle_{L^{2}(a,b)}&=&\frac{1}{(k+1)^{7/10}},\quad\left\langle \beta_{3},\phi_{k}\right\rangle_{L^{2}(a,b)}=\frac{1}{(k+1)^{4/5}}.
\label{example6}
\end{eqnarray}

 In Model 1, the fastest velocity decay of the eigenvalues of the autocovariance  and autocorrelation kernels is displayed.  The  regressor kernels define respective Hilbert-–Schmidt  integral operators. Model 2 corresponds to the most singular scenario, with the  regressors being defined by  compact but not  Hilbert--Schmidt operators.
The empirical functional mean quadratic errors, obtained from $r=100$ realizations of a functional sample of size $N=200,$ are showed, in  Table \ref{T3a} for Model 1, and in Table \ref{T6qq} for Model 2, considering the times $n=10t,$ $t=1,\dots,20,$ from the $200$ times computed.
The regularity properties, i.e., continuity and differentiability properties of the regression parameter functions, and of the autocovariance kernels  of the res-\linebreak ponse and innovations, as well as of the autocorrelation and regressor kernels, directly affect the performance of the presented  approach. For the sample sizes $N=200, 600, 1000,$ and truncation parameter values $k_{N}=2,3,4,$ tested, the best performance corresponds to   Model 1, providing the most regular parametric  scenario. The worst performance is observed in Model 2, corresponding to the most singular scenario, leading to the largest values of $\Lambda_{k_{N}}=\sup_{j=1,\dots,k_{N}}\frac{1}{\lambda_{j}(R_{0})-\lambda_{j+1}(R_{0})}.$ See
Theorem 2 of Guillas (2001), which  provides the convergence to zero of the functional mean-square error, in the norm of  $\mathcal{L}(H).$ Note that according to this result, the optimal choice of $k_{N}$ is such that $$\lambda_{k_{N}}^{4+2\gamma }(R_{0})=\frac{c\Lambda_{k_{N}}^{2}}{N^{1-2\epsilon}},\quad c>0,\ \epsilon<1/2,\ \gamma\geq 1 .$$ The rate of convergence in quadratic mean is then of order $$\lambda_{k_{N}}^{2}(R_{0})\simeq \left[\frac{\Lambda_{k_{N}}^{2}}{N^{1-2\epsilon}}\right]^{1/(\gamma +2)}$$
\noindent (see  Supplementary Material I, to compare with Models 3--6).

\small{\begin{table}[H]
\centering
\begin{tabular}{c r c r}
Time	&	EFMQE	&	Time	&	EFMQE	\\	\hline	\hline
10	&	  0.0075	&	110	&	  0.0030	\\		
20	&	 0.0072	&	120	&	   0.0038	\\		
30	&	 0.0058	&	130	&	   0.0023\\		
40	&	0.0039 &	140	&	 0.0036	\\		
50	&	 0.0048 &	150	&	 0.0018	\\		
60	&	0.0042 &	160	&	 0.0033	\\		
70	&	0.0020	&	170	&	  0.0052\\		
80	&	 0.0062	&	180	&	0.0056	\\		
90	&	 0.0036	&	190	&	  0.0023	\\		
100	&	0.0031	&	200	&	 0.0045	\\		
\end{tabular}
\caption{\emph{Model 1}. \ Empirical Functional Mean Quadratic Errors (EFMQEs), based on   $r=100$ repetitions of a functional response sample of size $N=200,$ considering the truncation order $k_{N}=4.$}\label{T3a}
\end{table}}
\small{\begin{table}[H]
\centering
\begin{tabular}{c r c r}
Time	&	EFMQE	&	Time	&	EFMQE\\	\hline	\hline
10	&	  0.2960	&	110	&	   0.0652	\\		
20	&	  0.3068	&	120	&	   0.0629	\\		
30	&	   0.2970	&	130	&	  0.0625\\		
40	&	0.3145  &	140	&	  0.0588	\\		
50	&	  0.2289 &	150	&	0.0372	\\		
60	&	 0.2491 &	160	&	  0.0655	\\		
70	&	0.2339	&	170	&	   0.0709\\		
80	&	 0.1496 &	180	&	0.1048	\\		
90	&	  0.1200	&	190	&	  0.1011	\\		
100	&	0.0922	&	200	&	  0.1237	\\		
\end{tabular}
\caption{\emph{Model 2}. Empirical Functional Mean Quadratic Errors (EFMQEs), based on   $r=100$ repetitions of a functional response sample of size $N=200,$ considering the truncation order $k_{N}=4.$}\label{T6qq}
\end{table}}
\normalsize{
\section{Application}
\label{application}
In this  section,  a panel of  small and medium size Spanish companies, in different industrial areas of the $15$ autonomous Spanish communities, in the Iberian Peninsula, is analyzed during the period $1999-2007,$ considering  $4$ industry sectors (Factories, Building, Commerce and Several).  Data were collected from the SABI (\emph{Sistema de An\'alisis de Balances Ib\'ericos}) database. The firm factor determinants of the leverage, considered in the analysis of the financing decisions,  are: \emph{Firm size}, \emph{Asset structure}, \emph{Profitability}, \emph{Growth},  \emph{Firm risk},  \emph{Age}.  Specifically, the leverage  is measured as the ratio of the total debt to the total assets; the firm size  is measured as the log of the total assets; the asset structure consists of the net
fixed assets divided by the total assets of the firm; the profitability is  computed as the ratio
between earnings before interest, taxes amortization and depreciation, and the total
assets; growth  is measured in terms of the growth of the assets, calculated
as the annual change of the total assets of the firm; the firm risk is given by the business risk,
and it is defined as the standard deviation of the earnings before the interest, and the taxes over book value
of the total assets, during the sample period; and, finally,  the age is measured as
the logarithm of the number of years that the firm has been operating.
  These firm factor determinants depend on   the Spanish community studied (spatial location  in the Iberian Peninsula), and on  the industrial area  sampled (located by the radial  argument, in the corresponding autonomous community). They are inspected during the period  1999-2007 (see Supplementary Material II, where the response, and these kernel regressors are represented for the Factory sector).  }
  {\normalsize \begin{table}[h!]
\centering \resizebox{12cm}{!}{\begin{tabular} {l||c c c c c c c c c}
SCC	&	1999	&	2000	&	2001	&	2002	&	2003	&	2004	&	2005	&	2006	&	2007	\\	\hline	\hline
1 (Galicia)	&	0.0053	&	0.0066	&	0.0136	&	0.0141	&	0.0050	&	0.0061	&	0.0095	&	0.0030	&	0.0182	\\		
2 (Asturias)	&	0.0559	&	0.0492	&	0.0285	&	0.0366	&	0.0299	&	0.0273	&	0.0198	&	0.0252	&	0.0280	\\		
3 (Cantabria)	&	0.0487	&	0.0213	&	0.0288	&	0.0384	&	0.0197	&	0.0175	&	0.0169	&	0.0146	&	0.0256	\\		
4 (P. Vasco)	&	0.0038	&	0.0051	&	0.0102	&	0.0070	&	0.0065	&	0.0035	&	0.0037	&	0.0052	&	0.0092	\\		
5 (Navarra)	&	0.0110	&	0.0127	&	0.0097	&	0.0106	&	0.0065	&	0.0088	&	0.0173	&	0.0106	&	0.0141	\\		
6 (Arag\'on)	&	0.0162	&	0.0069	&	0.0161	&	0.0208	&	0.0105	&	0.0107	&	0.0115	&	0.0078	&	0.0180	\\		
7 (Cataluña)	&	0.0058	&	0.0039	&	0.0186	&	0.0121	&	0.0043	&	0.0037	&	0.0046	&	0.0037	&	0.0204	\\		
8 (Cast. Le\'on)	&	0.0070	&	0.0052	&	0.0267	&	0.0309	&	0.0057	&	0.0061	&	0.0124	&	0.0058	&	0.0376	\\		
9 (La Rioja)	&	0.0662	&	0.0515	&	0.0237	&	0.0372	&	0.0221	&	0.0265	&	0.0585	&	0.0352	&	0.0237	\\		
10 (Extremadura)	&	0.0326	&	0.0273	&	0.0467	&	0.0501	&	0.0453	&	0.0452	&	0.0445	&	0.0417	&	0.0537	\\		
11	(Madrid)	&	0.0087	&	0.0021	&	0.0086	&	0.0057	&	0.0076	&	0.0096	&	0.0086	&	0.0059	&	0.0082	\\		
12	(Cast. Mancha)	&	0.0062	&	0.0087	&	0.0102	&	0.0220	&	0.0054	&	0.0053	&	0.0060	&	0.0036	&	0.0107	\\		
13 (C. Valenciana)	&	0.0129	&	0.0073	&	0.0104	&	0.0103	&	0.0094	&	0.0109	&	0.0179	&	0.0099	&	0.0240	\\		
14	(Andaluc\'{\i}a)	&	0.0170	&	0.0097	&	0.0249	&	0.0235	&	0.0048	&	0.0053	&	0.0085	&	0.0063	&	0.0440	\\		
15	 (Murcia)	&	0.0123	&	0.0086	&	0.0130	&	0.0137	&	0.0112	&	0.0102	&	0.0127	&	0.0057	&	0.0170	\\		
	
\end{tabular}
}
\caption{\small \emph{Factory Sector.} Mean  LOOCV errors at  each one of the Spanish  Autonomous Communities analyzed, for the  years studied in the period $1999-2007$.} \label{tab2}%\medskip
\end{table}
}  Beals smoothing has been traditionally consider  in  Ecology to predict the probability of appearance of different species in the sample units (see, for example,  C\'aceres and Legendre,  2008).      The overall firm structure of the Spanish communities studied, during the temporal period analyzed, has been taken into account, in  the selection  procedure of suitable  target 'industry sub-sectors', in our implementation of Beals smoothing. Specifically, the following    target 'industry  subsectors' (i.e.,  target 'species') are  considered:  11 target industry  subsectors in Factory sector (food; beverages and tobacco; paper, cardboard, desktop and graphic arts; articles and  automotive; textile manufacture and footwear; manufacturer for construction and equipment; industry wood, cork and furniture; metal-mechanical industry; chemical and paraquímica industry; diverse industries;  information technology and  the knowledge economy), 3 target industry  subsectors  in Building sector (specialized construction activities; edification;  civil work),   9  target industry  subsectors in  Commerce sector (household items, furniture and appliances; electronic, computer and telecommunication equipment and components; hardware, glass and construction materials; machinery, furniture and equipment for agricultural and industrial activities; raw materials, agricultural, for industry and waste materials; pharmaceuticals, perfumery, clothing accessories; books and others; textile products and footwear; vehicles, motor, spare parts, fuels and lubricants), and 6 target industry  subsectors in  Several sector (hostelry; service to the company; distribution service; social service; consumer services; transport).   
    The estimated probability values (by Beals smoothing),   that  a given target industry subsector occurs in a specific sampling unit, play the role of weights, in the computation of a smoothed spatial version of the observed  
 firm leverage           
             (see mean firm leverage per community, and the Beals smoothed leverage mapping in  Supplementary Material II). 
 Spatial interpolation on a regular grid is then performed.   The proposed  functional regression model is   fitted from such  spatially interpolated and  smoothed  data set, in terms of the empirical eigenvectors and eigenvalues (see Supplementary Material II for  more details).  
Given the small functional sample size $N=9,$ and the distance between the empirical eigenvalues of the autocovariance operator of the regression residuals, associated with the ordinary  least-squared estimator (see Section \ref{esresiduals}),  only one empirical eigenvector ($k_{N}=1$) is considered in
equation (\ref{estcomponent})  (see also Bosq, 2000).  Leave One Out Cross Validation (LOOCV) is applied to check  model fitting.  The    mean  Leave One Out Cross Validation errors at the $15$ Spanish communities, for  the  years in the period $1999-2007,$   are  displayed, in  Tables
\ref{tab2}--\ref{tab5}, for the four industry sectors studied, respectively.  (The  Spanish Community Codes (SCC) are given in Table \ref{tab2}).   Note that, a worse fitting of the model is observed for $k_{N}=2$ and $k_{N}=3$ (see Supplementary Material II).
    The best results correspond to the Factory sector  followed by the Building and Commerce sectors, where the target firm subsectors seem to be  selected, according to the enterprise structure of most of the Spanish communities. While in  the Several industry sector the worst performance is observed, since this sector includes a greater diversity of industrial areas with little spatial dependence. Despite these observed Beals smoothing effects, the magnitude of  the mean LOOCV errors are quite stable through time and space (see also mean LOOCV error maps in the Supplementary Material II, for $k_{N}=1$).
  Given the absence of records in the used database, in the Building sector in Cantabria, and in the Commerce sector in  La Rioja, 
 we omit these lines, in the corresponding mean LOOCV error tables. The effect of these missing data can be observed in the mean LOOCV error maps in  Supplementary Material II.  The development of the presented approach, under  missing data, constitutes the subject of future work. 
 \section{Final comments}
\label{fc}
This paper  extends the generalized least-squared  estimation results obtained in  Ruiz-Medina  (2016), on FANOVA analysis of fixed effects models in Hilber spaces, under dependent errors. Specifically, the approach presented allows the analysis of functional responses over a period of time, under the control of kernel regressor in that period. 
While, in  Ruiz-Medina  (2016), a scalar fixed effect design  is considered, and the experiment is not running over time. In  Benhenni,  Hedli-Griche and Rachdi  (2017), a functional random design is assumed in simple regression under dependent errors. Here, a kernel random design is considered in multiple regression under dependent errors.
 Furthermore, sufficient conditions are obtained   for the explicit derivation of the generalized least-squared regression  parameter estimator, beyond the restriction, considered in Ruiz-Medina  (2016), of the spectral diagonalization of the functional parameters,  in terms of a common eigenvector system.     
In the practical implementation of the proposed methodology, a suitable orthonormal basis $\{\varphi_{k}=\psi_{k},\ k\geq 1\}$  of $H$ must be  considered. When $H$ is an element of the scale of fractional Sobolev spaces, including  $L^{2}$ spaces, wavelet bases provide unconditional bases for these spaces. In particular,  $\{\psi_{k},\  k\geq 1\}$ can be  an orthonormal wavelet basis  providing an $[s]+1$-regular multiresolution analysis of an $L^{2}$ space,  for a  suitable  $s>0,$ allowing the continuous inversion of the autocovariance operator $R_{0}.$  Here,  $[\cdot ]$ denotes the integer part.
The  simulation study highlight the interaction between the regularity properties of the functional data, and the performance of the presented approach, depending on the  truncation order and  the sample size. On the other hand, the real-data example illustrates its performance,  from very small functional sample sizes, requiring small truncation orders, after applying a suitable smoothing technique. The role of the kernel regressors  is illustrated as well.       
In our example, they  soft the  effect of industrial areas, in the representation  of the annual Beals smoothed firm leverage maps (response),  as the output of a linear filter, with input the regression parameters, incorporating  the information  from firm factor determinants (kernel regressors),   depending on the  industrial area sampled, and on the  Spanish community studied.
{\normalsize  \begin{table}[h!]
\centering \resizebox{12cm}{!}{\begin{tabular} {l||c c c c c c c c c}
SCC	&	1999	&	2000	&	2001	&	2002	&	2003	&	2004	&	2005	&	2006	&	2007	\\	\hline	\hline
1	&	0.0238	&	0.0163	&	0.0332	&	0.0359	&	0.0154	&	0.0169	&	0.0261	&	0.0378	&	0.0157	\\		
2	&	0.0628	&	0.0680	&	0.0703	&	0.0494	&	0.0715	&	0.0937	&	0.0648	&	0.0445	&	0.0557	\\	
4	&	0.0416	&	0.0301	&	0.0382	&	0.0474	&	0.0336	&	0.0165	&	0.0376	&	0.0477	&	0.0365	\\		
5	&	0.0290	&	0.0301	&	0.0261	&	0.0808	&	0.0191	&	0.0399	&	0.0898	&	0.0756	&	0.0389	\\		
6	&	0.0245	&	0.0163	&	0.0375	&	0.0370	&	0.0122	&	0.0507	&	0.0407	&	0.0480	&	0.0158	\\		
7	&	0.0148	&	0.0136	&	0.0230	&	0.0276	&	0.0195	&	0.0149	&	0.0177	&	0.0471	&	0.0216	\\		
8	&	0.0540	&	0.0538	&	0.0664	&	0.0465	&	0.0684	&	0.0314	&	0.0610	&	0.1226	&	0.0795	\\		
9	&	0.0639	&	0.0457	&	0.0636	&	0.1043	&	0.0554	&	0.0937	&	0.0599	&	0.1636	&	0.0498	\\		
10	&	0.0294	&	0.0306	&	0.0337	&	0.0311	&	0.0260	&	0.0330	&	0.0487	&	0.0689	&	0.0461	\\		
11	&	0.0199	&	0.0333	&	0.0190	&	0.0255	&	0.0143	&	0.0092	&	0.0144	&	0.0418	&	0.0147	\\		
12	&	0.0251	&	0.0248	&	0.0316	&	0.0262	&	0.0246	&	0.0315	&	0.0432	&	0.0600	&	0.0222	\\		
13	&	0.0226	&	0.0224	&	0.0300	&	0.0310	&	0.0190	&	0.0190	&	0.0190	&	0.0179	&	0.0177	\\		
14	&	0.0335	&	0.0504	&	0.0546	&	0.0620	&	0.0298	&	0.0289	&	0.0245	&	0.1275	&	0.0336	\\		
15	&	0.0316	&	0.0321	&	0.0413	&	0.0432	&	0.0092	&	0.0397	&	0.0225	&	0.0332	&	0.0560	\\		
\end{tabular}
}
\caption{\small\emph{Building Sector.}  Mean  LOOCV errors at  each one of the Spanish  Autonomous Communities analyzed, for the  years studied in the period $1999-2007$.}\label{tab3}%\medskip
\end{table}
}  
 {\normalsize \begin{table}[h!]
\centering \resizebox{12cm}{!}{\begin{tabular} {l||c c c c c c c c c}
SCC	&	1999	&	2000	&	2001	&	2002	&	2003	&	2004	&	2005	&	2006	&	2007	\\	\hline	\hline
1	&	0.0094	&	0.0100	&	0.0071	&	0.0092	&	0.0089	&	0.0090	&	0.0113	&	0.0120	&	0.0078	\\		
2	&	0.0259	&	0.0258	&	0.0233	&	0.0247	&	0.0208	&	0.0250	&	0.0260	&	0.0270	&	0.0223	\\		
3	&	0.0211	&	0.0236	&	0.0153	&	0.0153	&	0.0180	&	0.0236	&	0.0274	&	0.0251	&	0.0154	\\		
4	&	0.0049	&	0.0052	&	0.0052	&	0.0047	&	0.0054	&	0.0064	&	0.0057	&	0.0064	&	0.0051	\\		
5	&	0.0879	&	0.0850	&	0.0821	&	0.0789	&	0.0833	&	0.0877	&	0.0810	&	0.0826	&	0.0794	\\		
6	&	0.0129	&	0.0172	&	0.0126	&	0.0128	&	0.0149	&	0.0166	&	0.0188	&	0.0171	&	0.0109	\\		
7	&	0.0042	&	0.0057	&	0.0045	&	0.0067	&	0.0060	&	0.0061	&	0.0048	&	0.0064	&	0.0058	\\		
8	&	0.0176	&	0.0165	&	0.0178	&	0.0175	&	0.0169	&	0.0157	&	0.0169	&	0.0148	&	0.0187	\\				
10	&	0.0084	&	0.0085	&	0.0106	&	0.0093	&	0.0082	&	0.0094	&	0.0090	&	0.0105	&	0.0097	\\		
11	&	0.0099	&	0.0101	&	0.0105	&	0.0100	&	0.0114	&	0.0130	&	0.0190	&	0.0145	&	0.0132	\\		
12	&	0.0099	&	0.0138	&	0.0068	&	0.0052	&	0.0072	&	0.0122	&	0.0183	&	0.0205	&	0.0074	\\		
13	&	0.0079	&	0.0075	&	0.0082	&	0.0079	&	0.0093	&	0.0110	&	0.0092	&	0.0082	&	0.0088	\\		
14	&	0.0236	&	0.0239	&	0.0206	&	0.0209	&	0.0235	&	0.0251	&	0.0228	&	0.0241	&	0.0197	\\		
15	&	0.0088	&	0.0090	&	0.0072	&	0.0069	&	0.0086	&	0.0110	&	0.0106	&	0.0106	&	0.0074	\\		

\end{tabular}
}
\caption{\small \emph{Commerce Sector.} Mean  LOOCV errors at  each one of the Spanish  Autonomous Communities analyzed, for the  years studied in the period $1999-2007$.}\label{tab4} %\medskip
\end{table}
} {\normalsize \begin{table}[h!]
\centering \resizebox{12cm}{!}{\begin{tabular} {l||c c c c c c c c c}
SCC	&	1999	&	2000	&	2001	&	2002	&	2003	&	2004	&	2005	&	2006	&	2007	\\	\hline	\hline
1	&	0.0578	&	0.0577	&	0.0547	&	0.0595	&	0.0296	&	0.0442	&	0.0464	&	0.0437	&	0.0526	\\		
2	&	0.0351	&	0.0085	&	0.0157	&	0.0253	&	0.1956	&	0.0228	&	0.0157	&	0.0341	&	0.0440	\\		
3	&	0.0360	&	0.0385	&	0.0354	&	0.0334	&	0.3637	&	0.0357	&	0.0480	&	0.0406	&	0.0449	\\		
4	&	0.0190	&	0.0257	&	0.0214	&	0.0341	&	0.2277	&	0.0197	&	0.0191	&	0.0253	&	0.0307	\\		
5	&	0.0674	&	0.0379	&	0.0397	&	0.0711	&	0.2124	&	0.0416	&	0.0407	&	0.0389	&	0.0472	\\		
6	&	0.0207	&	0.0311	&	0.0376	&	0.0578	&	0.7336	&	0.0279	&	0.0363	&	0.0298	&	0.0305	\\		
7	&	0.0440	&	0.0401	&	0.0109	&	0.0373	&	0.0876	&	0.0192	&	0.0232	&	0.0351	&	0.0371	\\		
8	&	0.0215	&	0.0264	&	0.0137	&	0.0714	&	0.5700	&	0.0308	&	0.0136	&	0.0204	&	0.0202	\\		
9	&	0.0406	&	0.0592	&	0.0689	&	0.0707	&	0.2736	&	0.0732	&	0.0560	&	0.0533	&	0.0631	\\		
10	&	0.0464	&	0.0479	&	0.0315	&	0.1038	&	0.1239	&	0.0416	&	0.0364	&	0.0450	&	0.0514	\\		
11	&	0.0647	&	0.0259	&	0.0333	&	0.0292	&	0.0718	&	0.0259	&	0.0183	&	0.0418	&	0.0433	\\		
12	&	0.0273	&	0.0288	&	0.0206	&	0.0465	&	0.1548	&	0.0556	&	0.0243	&	0.0569	&	0.0532	\\		
13	&	0.0190	&	0.0330	&	0.0315	&	0.0554	&	0.4012	&	0.0475	&	0.0399	&	0.0398	&	0.0392	\\		
14	&	0.0624	&	0.0092	&	0.0223	&	0.0237	&	0.2590	&	0.0245	&	0.0351	&	0.0307	&	0.0483	\\		
15	&	0.0247	&	0.0346	&	0.0116	&	0.0240	&	0.3455	&	0.0468	&	0.0277	&	0.0848	&	0.0948	\\		
	\end{tabular}
}
\caption{\small \emph{Several Sector.} Mean  LOOCV errors at  each one of the Spanish  Autonomous Communities analyzed, for the  years studied in the period $1999-2007$.} \label{tab5}
%\medskip
\end{table}
}

\begin{acknowledgements}
This work has been supported in part by project
MTM2015-71839-P of MINECO, Sapin (co-funded with FEDER funds). D. Miranda supported by  FINCyT, Inn\'ovate Per\'u
\end{acknowledgements} 
% BibTeX users please use one of
%\bibliographystyle{spbasic}      % basic style, author-year citations
%\bibliographystyle{spmpsci}      % mathematics and physical sciences
%\bibliographystyle{spphys}       % APS-like style for physics
%\bibliography{}   % name your BibTeX data base

\end{document}